\documentclass[11pt]{article}
\usepackage{bm, mathrsfs, graphics,amssymb,amsmath,subeqnarray,setspace,graphicx,amsthm,epstopdf,subfigure,color,float,hyperref,authblk}
\usepackage{enumerate,algorithmic,algorithm}
\usepackage{mathtools}

\usepackage{upref}
\usepackage{svg}
\usepackage{geometry}
\usepackage{epstopdf}

\def\R{\mathbb{R}}

\def\md{\mathop{}\mathopen\mathrm{d}}

\newcommand{\tcb}[1]{\textcolor{blue}{#1}}

\newtheorem{theorem}{Theorem}[section]
\newtheorem{lemma}{Lemma}[section]

\newtheorem{remark}{Remark}[section]

\newtheorem{assumption}{Assumption}[section]

\DeclareRobustCommand{\corrauthormark}{\footnotemark[4]}

\numberwithin{equation}{section}

\title{A particle method for the Boltzmann equation via amortized sampling from
Green's function of the lifted linear operator}

\author[a]{Zichang Ju\thanks{E-mail: jzc\_sypqf@sjtu.edu.cn}}
\author[a,b]{
Lei Li\thanks{Email: leili2010@sjtu.edu.cn}
\corrauthormark
}
\author[a]{
Yang Yu 
\thanks{Email: yuyang2357@sjtu.edu.cn}
\thanks{Corresponding authors}
}

\affil[a]{School of Mathematical Sciences, Shanghai Jiao Tong University, Shanghai 200240, China}

\affil[b]{Institute of Natural Sciences, MOE-LSC, Shanghai Jiao Tong University, Shanghai 200240, China}

\date{}

\begin{document}

\maketitle

% \footnotetext[4]{Corresponding authors.}

\begin{abstract}
The collision operator for the Boltzmann equation is a nonlinear nonlocal operator. When lifted in the extended 2-particle space, it is viewed as the projection of a collisional linear operator.
In this paper, we propose a particle method that samples the post-collision relative velocity directly from the Green's function of this operator (the transition probability of the generated time-continuous Markov chain). The normalizing flow amortized sampling is then proposed to reduce the sampling complexity.  The resulted method takes $O(N)$ each time where $N$ is the particle number, and conserves momentum and energy exactly.  This method does not require the boundedness of the kernel and, more importantly, it allows learning the Green's function directly from the scattering data without selecting the kernel in a specified family.
\end{abstract}

\section{Introduction}\label{sec:intro}
The Boltzmann equation is one of the fundamental models in kinetic theory for describing the evolution of the one-particle distribution function $f(t,x,v)$ of a dilute gas away from thermodynamic equilibrium \cite{cercignani1988boltzmann, cercignani2013mathematical}, where $t$ denotes time, $v=(v_1,\cdots,v_d)\in \mathbb{R}^d$ denotes the velocity, $x=(x_1,\cdots,x_d)\in \Omega\subset \mathbb{R}^d$ denotes the particle position, and $\Omega$ is the spatial domain. It provides a mesoscopic description between microscopic particle dynamics and macroscopic fluid models, and plays an important role in rarefied and nonequilibrium gas dynamics. Representative applications arise in high-altitude and hypersonic flows, molecular gas dynamics, and gas transport at micro- and nanoscale dimensions \cite{bird1994molecular,boyd2017nonequilibrium,karniadakis2005microflows,sone2007molecular}. In these regimes, molecular collisions and their associated scattering laws have a direct influence on the evolution of the velocity distribution.

In the following we focus on the 3-dimensional case, for which the Boltzmann equation reads:
\begin{equation}
    \partial_t f+v\cdot \nabla_x f=
    Q(f,f),
\end{equation}
where $Q(f,f)$ is a quadratic operator with the form
\begin{equation}
    Q(f,f)(v):=\int_{\mathbb{R}^{3}}\int_{\mathbb{S}^{2}} B(v-v_*,  n')(f(v_*^{\prime})f(v^{\prime})-f(v_*)f(v)) \mathrm{d}n{'} \mathrm{d}v_*.
\end{equation}
Here for simplicity the variables $t$ and $x$ are omitted. Moreover, $v$ and $v_*$ are the velocities of pre-collision particles while $v' $ and $v'_*$ are the corresponding velocities of post-collision particles related by the elastic collision law: 
\begin{equation}
    v' = \bar v+|q| n'/2, \qquad   v'_*=  \bar v - |q| n'/2,
\end{equation} 
where we denote 
\begin{equation}
\bar v=(v+v_*)/2, \quad q=v-v_*,\quad  n=q/{|q|}
\end{equation}
for simplicity. Under the rotational invariance assumption on the collision law, the collision kernel $B(q,n')$ has the form $B(q,n') = B(|q|, \cos \theta)$, where $\cos\theta = n\cdot n'$. We use the  rotational invariance assumption all through the work. Moreover, one has $B(|q|,\cos\theta)=|q|\sigma(|q|,\theta)$
%Denote $\cos\theta=\mu=n \cdot n' $, and set $B(|q|,\cos\theta)=|q|\sigma(|q|,\theta)$ the collision kernel for a central potential
, where $\sigma(|q|,\theta)$ is the differential collision cross section. For cutoff kernels, the total cross section is given by 
\begin{equation}
\label{eq:sigmatotal}
    %\sigma_{\text{total}}(|q|)=2\pi\int_{[0,\pi]}\sigma(|q|,\theta)\sin\theta\,\mathrm{d}\theta ,
    \sigma_{\text{total}}(|q|)=\int_{\mathbb{S}^{2}}\sigma(|q|,\theta)\mathrm{d}n' =2\pi\int_0^{\pi}\sigma(|q|,\theta)\sin\theta\,\mathrm{d}\theta.
\end{equation}

The Boltzmann collision equation has several fundamental physical structures \cite{cercignani1988boltzmann}. It preserves
the collision invariants corresponding to the conservation of mass, momentum and kinetic energy. It also satisfies the entropy dissipation property, usually referred to as Boltzmann's \(H\)-theorem.
The numerical treatment of the Boltzmann equation is challenging because the collision operator is nonlinear and nonlocal and involves a high-dimensional integration over the collision partners and scattering directions.

%The mathematical well-posedness theory of the Boltzmann equation is delicate because the collision operator is nonlinear, nonlocal, and may be singular for non-cutoff kernels. For general large initial data, a fundamental framework is provided by the DiPerna-Lions theory of renormalized solutions, which gives global existence and weak stability under broad assumptions \cite{dipernaLions1989}. More classical or strong well-posedness results areusually obtained under additional assumptions, such as cutoff kernels, hard potentials, or perturbations of equilibrium. For non-cutoff kernels, the angular singularity produces additional regularizing effects and leads to fractional diffusion-type behavior in velocity \cite{alexandre2000entropy,villani2002review}.

To solve the Boltzmann equation accurately and efficiently, both deterministic and particle methods have been extensively developed. Among deterministic approaches, discrete velocity methods \cite{mouhot2013convolutive} and spectral methods \cite{pareschi2000numerical} are two representative classes for the numerical treatment of the Boltzmann collision operator, where the latter can achieve spectral accuracy for sufficiently smooth solutions \cite{pareschi2000numerical} and substantially reduce the computational cost of collision evaluation \cite{gamba2017fast}. Despite these advantages in accuracy and computational efficiency, spectral methods do not automatically preserve positivity or the exact conservation of momentum and energy, and suitable modifications may therefore be required \cite{filbet2011analysis,pareschi2022moment}. 
%Deterministic methods including discrete velocity methods (DVM) \cite{bobylev1995approximation,goldstein1989investigations,panferov2002new,rogier1994direct}, Fourier-Galerkin method \cite{pareschi1996fourier}, 
%finite-volume or finite-difference discretizations in phase space.
%Spectral methods are attractive because they can achieve spectral accuracy for smooth solutions, but the preservation of positivity and physical invariants often requires additional care. In contrast, particle methods \cite{HOMOLLE20072341,rjasanow1996stochastic} approximate the distribution function by an ensemble of particles and simulate transport and collisions statistically. %This class is especially important for high-dimensional and rarefied-gas regimes.
In contrast, particle methods \cite{bird1994molecular,bird2013dsmc,pareschi2001introduction,rjasanow1996stochastic} represent the distribution function by an ensemble of particles and model the collisional dynamics statistically, and are able to capture some microscopic behaviors that the reduced PDE models may have lost.  Among them, the Direct Simulation Monte Carlo (DSMC) method \cite{bird1994molecular,bird2013dsmc} is one of the most widely used approaches for the Boltzmann equation, particularly in rarefied-gas dynamics. In DSMC and related Monte Carlo schemes, the distribution function is approximated by an empirical particle distribution, while the collision operator is realized through randomly selected particle pairs and sampled post-collision states according to the prescribed collision law \cite{nanbu1980direct,pareschi2001introduction}. By avoiding a deterministic discretization of the velocity space, DSMC circumvents the high-dimensional velocity grids required by deterministic kinetic solvers. For elastic binary collisions, momentum and energy can be preserved at the level of each accepted collision pair through the collision parametrization \cite{bird1994molecular}. On the other hand, the stochastic nature of these methods introduces statistical fluctuations, so that accurate estimation of the distribution function or its moments may require a sufficiently large number of simulation particles \cite{HOMOLLE20072341}.

A key ingredient in DSMC is the specification of a molecular collision model, which determines both the collision rate and the distribution of post-collision scattering directions \cite{bird1994molecular}. In practical simulations, commonly used models such as the hard sphere, variable hard sphere (VHS), and variable soft sphere (VSS) models \cite{bird1994molecular,koura1991variable} prescribe parametric forms of the collision cross section and angular scattering law, thereby providing explicit rules for collision sampling. More generally, DSMC is not restricted to these parametric models; rather, its implementation requires sufficient information to evaluate collision rates and sample post-collision states. When such scattering information is available only through a finite set of experimental measurements \cite{rothe1959total,von2015analysis}, a convenient collision model or sampling rule may not be directly available. This provides a motivation for extracting the relevant collision information from data and incorporating it into particle collision dynamics without prescribing a specific parametric molecular model.

Data-driven methods have recently been introduced into kinetic modeling, including neural surrogate models for the Boltzmann collision operator $f\mapsto Q(f,f)$ \cite{xiao2021using,xiao2023relaxnet}, data-driven reduced-order approximations for accelerating collision evaluation \cite{alekseenko2022fast}, and inverse approaches for recovering collision kernels from indirect measurements \cite{lai2021reconstruction,li2023determining}. More recently, machine-learned microscopic collision models have begun to be incorporated directly into DSMC to replace prescribed phenomenological scattering laws or expensive high-fidelity collision calculations \cite{roohi2026physics}.

In this paper, We first develop the collision algorithm in the spatially homogeneous setting, where the collision mechanism can be isolated from spatial transport. The same collision step subsequently can be incorporated into the particle-in-cell (PIC) framework with operator splitting \cite{bailo2024collisional, serikov1999particle} for spatially non-homogeneous problems. Compared with researches before, the present work addresses a different learning problem. We do not seek to approximate the nonlinear mapping $f\mapsto Q(f,f)$ or select and calibrate a classical parametric collision model. Instead, we lift the binary collision dynamics to the two-particle space and learn the Green's function (a probability distribution). For fixed center-of-mass velocity and relative speed, the lifted collision action reduces to a linear angular operator $\Omega_J$ acting on the relative direction $n\in \mathbb S^2$. Under rotational invariance, spherical harmonics diagonalize this operator,
\begin{equation*}
    \Omega_JY_{\ell m} = -\lambda_\ell(|q|)Y_{\ell m},
\end{equation*}
so that the sequence $\{\lambda_\ell(|q|)\}_{\ell \geq 0}$ determines the Green’s function of the corresponding angular evolution. This representation also separates the information contained in two types of scattering measurements. The total cross section determines the overall collision rate
\begin{equation*}
    K(|q|) = |q| \sigma_{\text{total}}(|q|),
\end{equation*}
whereas samples of the scattering angle conditional on the relative speed determine the normalized angular shape. We fit these quantities through an impact-parameter representation and then compute the spectral coefficients $\lambda_\ell (|q|)$. This avoids assuming an HS, VHS, or VSS collision law and avoids the potentially unstable division by $\sin \theta$ involved in explicitly reconstructing the differential cross section. For the data-to-spectrum reconstruction, we further assume that the scattering law admits a single-valued monotone relation between the impact parameter and the scattering angle. The resulting Green’s function is the finite-time transition law of the lifted angular process for a fixed particle pair. We decompose it into a no-collision atom and a continuous component, approximate the latter by a truncated spherical-harmonic expansion, and train a conditional monotone normalizing flow to amortize its repeated sampling over $(|q|, \Delta t)$. After the offline training stage, the collision update has $O(N)$ cost per time step. Since the center-of-mass velocity and the magnitude of the relative velocity are unchanged in every pairwise update, momentum and kinetic energy are preserved exactly.

The main methodological and analytical contributions of this work are summarized as follows.
\begin{enumerate}
    \item \textbf{A Green's function based particle collision rule.}
    We propose a particle collision rule that samples the finite-time angular transition generated by the lifted two-particle operator for each randomly paired particle pair. Rather than using only an eventwise single-collision angular update, the method incorporates the finite-time evolution of the lifted angular process into one collision step. After the offline construction of the sampler, the online collision stage has $O(N)$ complexity per time step and preserves momentum and kinetic energy exactly at the pair level.

    \item \textbf{Green's function reconstruction and scattering angle sampler.}
    We devise a procedure that infers the collision rate and the solver-relevant angular spectrum from scattering angle samples and total cross section measurements. The total cross section data determine the impact parameter scale and the collision rate, whereas the conditional scattering angle samples determine the normalized angular shape. These quantities are then used to compute the eigenvalues $\lambda_\ell(|q|)$ without prescribing an collision model. Then we construct a conditional normalizing flow tailored to the continuous Green's function component, thereby obtaining a sampler of the Green's function.

    \item \textbf{Convergence and nonnegativity of the truncated Green's function representation.}
    For rotationally invariant kernels with finite total cross section, we establish $L^2$ convergence of the truncated continuous component of the Green's function and $L^1$ convergence of its normalized density under $B(\cdot;|q|)\in L^2([-1,1])$. Under the additional assumption that the Legendre projections of $B$ converge uniformly, we derive an $L^\infty$ truncation estimate and prove that sufficiently high-order truncations are nonnegative uniformly for $\Delta t\in[\delta,T]$, with $\delta>0$. These results provide the analytical justification for using the finite spectral approximation as a probability density.

\end{enumerate}

The rest of this paper is organized as follows. Section \ref{sec:theoretical foundation} introduces the lifted collision operator, the Green’s function particle update, and its spherical harmonic representation. Section \ref{sec:Algorithm} presents the data-to-spectrum reconstruction, the conditional normalizing flow sampler, and the complete Boltzmann solver. Section \ref{sec:truncation} analyzes the convergence and positivity of the truncated Green’s function. Section \ref{sec:discom} discusses the formal large-particle consistency of the method and its relation to DSMC. Section \ref{sec:num} reports numerical experiments. %, and Section \ref{sec:conc} concludes the paper.

\section{The methodology and foundations}\label{sec:theoretical foundation}

We explain the main idea in subsection~\ref{subsec:linearlift} and make comments on related works, and gather some facts about the Green's function in subsection~\ref{subsec:green}.

\subsection{The methodology based on the linear lifting of the collision operator}\label{subsec:linearlift}

Our goal is to propose a particle method for the Boltzmann equation following our earlier work on Landau equation in \cite{du2025structure}. 
The intrinsic idea in \cite{du2025structure} is that the statistical effect of the post-collision relative velocity in the grazing regime is a spherical diffusion and can be sampled easily. Let us here briefly explain the fact from the nonlinear Landau operator
\[
\begin{split}
Q_L(f)&=
\frac{1}{2}\nabla_{v}\cdot\bigg( \int_{\mathbb{R}^d}
    A(v-v_{\ast})(f(v_{\ast})\nabla_{v}f(v)-f(v)\nabla_{v_\ast}f(v_\ast))\md v_\ast\bigg)\\
&=\frac{1}{2}
\int_{\mathbb{R}^d} (\nabla_v-\nabla_{v_\ast})\cdot(A(v-v_{\ast})
(\nabla_v-\nabla_{v_\ast})f(v)f(v_\ast))\mathrm d v_\ast.
\end{split}
\]
If we discard the integration, then one comes at the linear operator
\[
A_LF:=\frac{1}{2}(\nabla_v-\nabla_{v_\ast})\cdot(A(v-v_{\ast})
(\nabla_v-\nabla_{v_\ast})F),
\]
lifted in $(v, v_*)$ space recently used in \cite{guillen2025landau} for theoretic study.
With the notation $q$ before one may find that 
\[
\nabla_v-\nabla_{v_*}=2\nabla_q
\]
and thus $A_LF$ is exactly the spherical diffusion on the relative velocity agreeing the particle method in \cite{du2025structure}.
% \begin{remark}
% In fact, applying the large particle limit of the random batch type particle system \cite{jin2022mean}
% to the particle method in \cite{du2025structure}, one may easily recover this lifted linear operator for the two-particle system. 
% \end{remark}

Motivated by the above understanding of the Landau equation, it is natural to unwrap the nonlinear collision operator in the Boltzmann equation into a linear operator in $(v, v_*)$ space as
\begin{equation*}
A_BF=J F:=
\int_{\mathbb{S}^{2}} B(|v-v_*|, \frac{(v-v_*)\cdot n'}{|v-v_*|})(F(t,v^{\prime}, v_*^{\prime})-F(t,v, v_*)) \mathrm{d}n'.
\end{equation*}
With the molecular chaos $F(t, v, v_*)\approx f(t, v)f(t, v_*)$, one clearly has
$Q(f,f)=\int JF dv_*$.
Denote $n' = \frac{v'-v_*'}{|q|} $, $ \mu = n\cdot n'=\cos\theta$. Then the momentum and energy conservations give:
\begin{equation}
    v = \bar{v}+\frac{|q|}{2}n , \quad v_* = \bar{v}-\frac{|q|}{2}n , \quad
    v' = \bar{v}+\frac{|q|}{2}n' , \quad v_* '= \bar{v}-\frac{|q|}{2}n'.
    \label{eq:updatev}
\end{equation}
The operator can be written in a clean form as
\begin{equation}\label{eq:linearlift}
    J F(n,t;\bar{v}, |q|) :=\int_{\mathbb{S}^2} B(|q|,\mu)\left( F(n', t; \bar{v}, |q|) - F(n, t; \bar{v}, |q|) \right)\mathrm{d}n'.
\end{equation}

The operator $J$ is then a generator $\Omega_J$ of a jump process on the unit sphere (the operator $J$ acts on functions defined in $\R^{6}$ while $\Omega_J$ acts on functions defined on $\mathbb{S}^2$ for given $\bar{v}$ and $|q|$ determined by $v$ and $v_*$ pair). We will make use of the Green's function or semigroup of $\Omega_J$ to design our particle method.
Let %$G(n', t ; \bar{v}, |q|, n)$
$G(n', t ; |q|, n)$ be the Green's function, i.e., the solution to the linear equation
%\[
%\partial_t G=JG, \quad G(\cdot, 0; %\bar{v}, |q|, n)=\delta_{n}(\cdot).
%\]
\[
\partial_t G=\Omega_J G, \quad G(\cdot, 0;  |q|, n)=\delta_{n}(\cdot).
\]
Here, $\delta$ indicates the Dirac mass with respect to the Hausdorff measure on the unit sphere. When $|q|=0$, we let
$G(\cdot, t; |q|, n)\equiv \delta_n(\cdot)$ (this is not important). With the Green's function, one then has
\[
(e^{tJ}F_0)(v, v_*)=\int_{\mathbb{S}^2} \tilde{F}_0\left(n_0, \frac{v+v_*}{2}, |v-v_*|\right) G\left(\frac{v-v_*}{|v-v_*|}, t; |v-v_*|, n_0\right) \mathrm{d}n_0,
\]
where $\tilde{F}_0$ is defined by
\[
\tilde{F}_0\left(\frac{v-v_*}{|v-v_*|}, \frac{v+v_*}{2}, |v-v_*|\right)=F_0(v, v_*),\quad v\neq v_*
\]
and $\tilde{F}_0(n, v, 0)=F_0(v, v)$.

Then, a particle method for the Boltzmann equation similar to
\cite{du2025structure} is given in Algorithm~\ref{Boltzmannsphere}.
\begin{algorithm}%[!t]
	\caption{The general particle method based on Green's function}
	\label{Boltzmannsphere}
	\begin{algorithmic}[1]
		\REQUIRE Particle number $N$, initial velocity of particles $v_i^{0}$ $(i=1,\cdots, N)$, time step $ \Delta t$, terminal time $T$.
		%\STATE Set $n=0$.
		\FOR{$n=1: \lceil T/\Delta t \rceil$}
		\STATE Randomly divide $N$ particles into $N/2$ pairs.
		\STATE For each pair $i\in \{1,\cdots, N/2\}$, compute
		\begin{gather}
		q = v^{n}_{i} - v^{n}_{\theta(i)}, \quad s = v^{n}_{i} + v^{n}_{\theta(i)}, \quad n = q/{|q|}.
		\end{gather}
		\STATE  Draw a sample $n'$ from the Green's function %$G(\cdot, \Delta t; s, |q|, n)$. 
        $G(\cdot, \Delta t; |q|, n)$.
		\STATE Compute the post-collision velocity difference $q'=|q|n'$, and update the velocities 
		\begin{gather}
		v_i^{n+1} = \frac{s+q'}{2}, \quad v_{\theta(i)}^{n+1} = \frac{s-q'}{2}.
		\end{gather}
		%\STATE Increment \(n \gets n + 1\).
		\ENDFOR
		\STATE \textbf{Return} the velocities at terminal time: \(v_i^{T / \Delta t}\) (\(i = 1, \ldots, N\)).
	\end{algorithmic}
\end{algorithm}

We present several immediate comments here. 
\begin{enumerate}[(a)]
\item 
The idea of using the semigroup of the linear operator $J$ for numerical method is not new. See, for example,  \cite{medaglia2024particle}. The Green's function in \cite{medaglia2024particle}, however, is used to derive a kernel for the Landau collision and the resulted method is a DSMC type algorithm. The comparison between the proposed method and DSMC will be discussed in detail later in section~\ref{subsec:comparedsmc}.

\item 
For rotational symmetric case, the Green's function only depends on the angle $\theta$ between $n'$ and $n$ and the sampling process can then be naturally performed on this angle.
\end{enumerate}

% \subsection{Setting of the semigroup}

% should we discuss the reasonableness of the semigroup approximation?

\subsection{Spectral representation of the Green's function}\label{subsec:green}

We now gather the spectral representation of the angular semigroup and its Green's function.
 For fixed $(\bar v, |q|)$, the operator $J_{|q|}$ is rotationally invariant on $\mathbb S^2$. Consequently, spherical harmonics diagonalize its angular action, as follows from the classical Funk-Hecke formula \cite[Theorem 3]{seeley1966spherical}.
 
 % Since the present setting allows non-cutoff kernels satisfying only the finite angular momentum-transfer condition, we briefly recover the eigenvalues by angular truncation and the Funk-Hecke formula\cite[Theorem 3]{seeley1966spherical}, and then derive the corresponding Green kernel.

 % \tcr{What is the finite momentum transfer condition?}

We assume the finite momentum-transfer condition \cite{he2021boltzmann} holds, which tells that $B(1-\mu) \in L^1([-1,1])$ for fixed $|q|$. Moreover, we assume that $F_0 \in L^2(\mathbb S^2)$ for fixed $(\bar v, |q|)$ to allow the spherical harmonic expansion for $F_0$. 

One may only analysis the case $B\in L^1([-1,1])$ (for $B \notin L^1$, one could consider the truncated kernel $B_\varepsilon(\mu;|q|) = B(\mu;|q|)\boldsymbol{1}_{\{\mu\leq 1-\varepsilon\}} $, and the auxiliary function $h_\ell(\mu) = (1-P_\ell (\mu))/(1-\mu)\in C([-1,1])$ with monotone convergence theorem to derive similar results). The spherical harmonic expansion for $F_0$ gives 
\begin{equation*}
    F_0(n;\bar v,|q|)=\sum_{\ell=0}^{\infty}\sum_{m=-\ell}^{\ell}F_{\ell m}(0;\bar v,|q|)Y_{\ell m}(n), \quad \text{with } F_{\ell m}(0;\bar v,|q|) = \int_{\mathbb S^2} F_0(n;\bar v,|q|) \overline{Y_{\ell m}(n)}\mathrm dn,
\end{equation*}
where $Y_{\ell m}(n)$ is the spherical harmonic function and  $\overline{Y_{\ell m}(n)}$ is the complex conjugate of $Y_{\ell m}(n)$.

Then the Funk-Hecke formula \cite[Theorem 3]{seeley1966spherical} implies that
\begin{equation*}
    \Omega_J Y_{\ell m} = -\lambda_\ell Y_{\ell m}, \quad \text{with }  \lambda_\ell(|q|) = 2\pi \int_{-1}^1 B(\mu;|q|)(1-P_\ell(\mu))\mathrm{d}\mu \geq 0.
\end{equation*}
One with addition theorem \cite{jackson2012classical} has \begin{equation*}
    F(t, n; \bar v,|q|)=  \int_{\mathbb{S}^2} G(t,\mu;|q|)F_0(n'; \bar v,|q|)\mathrm{d}n',  \quad \text{with }  G(t,\mu;|q|) = \sum_{\ell=0}^\infty \frac{2\ell+1}{4\pi}\exp(-\lambda_\ell(|q|) t)P_\ell(\mu).
\end{equation*}
Here $G$ is the desired Green's function, which should be understood in the distributional sense whenever the kernel contains a singular part.

Denote $K(|q|):=\lim\limits_{\ell\to\infty}\lambda_{\ell}(|q|)$. Since $\lim\limits_{\ell \to \infty} P_\ell(\mu) \to 0$ for $\mu \in (-1,1)$, then for $B(\mu;|q|)\in L^1([-1,1])$, it holds that 
\[
K(|q|) = |q| \sigma_{\text{total}}(|q|).
\]
Moreover, for $B(\mu;|q|) \notin  L^1([-1,1])$, $\sigma_{\text{total}}(|q|) = \infty$ and it still holds that $K(|q|) = |q| \sigma_{\text{total}}(|q|) = \infty $. 

\begin{remark}
As we consider learning the eigenvalues from the 
scattering data. The range of $|q|$ considered in this work is finite and thus we restrict ourselves to the finite-total-cross-section regime.
\end{remark}

Note that $G(0,\mu;|q|)=\sum\limits_{\ell=0}^{\infty}\frac{2l+1}{4\pi}P_{\ell}(\mu)=\frac{1}{2\pi}\delta(1-\mu)$, hence the Green's function \(G(t,\mu;|q|)\) can be decomposed into two parts:

\begin{equation}
\begin{aligned}
   G( t,\mu;|q|)&=\frac1{2\pi}\exp(-K(|q|) t)\delta(1-\mu)+\sum\limits_{\ell=0}^{+\infty}\frac{2\ell+1}{4\pi}\Big(\exp(-\lambda_\ell(|q|) t)-\exp(-K(|q|) t)\Big)P_\ell(\mu)\\
    &:=G^{\text{d}}+G^{\text{c}}.
    \end{aligned}
    \label{eq:decomposGreen}
\end{equation}

\section{Method details}\label{sec:Algorithm}

As pointed in \cite{medaglia2024particle}, sampling from a general kernel like the Green's function is time-consuming. If the kernel $B$ is explicitly known, the proposed approach may not be efficient compared to DSMC. However, the framework could be beneficial if we do not know $B$ or the scattering kernel is an unbounded operator. In this section, we propose to infer a finite-dimensional spectral representation of the Green’s function from scattering data directly to avoid the selection of the kernel and propose a normalizing flow amortized sampling approach to reduce the complexity. See section~\ref{subsec:comparedsmc} for more discussions.  

\subsection{Inverse reconstruction of Green's function from scattering data}\label{subsec:optimization}

In this subsection we present our method to reconstruct Green's function $G( t,\mu;|q|)$. 

With~\eqref{eq:decomposGreen}, we only need to determine $\lambda_{\ell}(|q|), \sigma_{\text{total}}(|q|)$ when given $|q|, \ell$ through scattering data. Initially we clarify what we have.

For rotational symmetric case, we only care about scattering angle $\theta$, and in physics experiments the events of two particle collided with scattering angle $\theta$ given relative velocity $q$ can be detected  \cite{von2015analysis}.  Moreover, total cross sections $\sigma_{\text{total}}$ can be measured independently at different relative velocities $q$ \cite{rothe1959total}. Hence the input is the tuple $(|q_i|, \theta_i)_{i=1}^{N_1}$ and $(|q_j|,\sigma_{\text{total}}(|q_j|))_{j=1}^{N_2}$.

From the definition of differential cross section $\sigma$: \cite{bird1994molecular} \begin{equation}
\label{eq:sigma}
    \sigma(|q|,\theta)=\frac{b}{\sin\theta}\left|\frac{\mathrm{d}b}{\mathrm{d}\theta}\right|.
\end{equation}Therefore, instead of reconstructing the differential cross section $\sigma$ directly, it is sufficient to learn the deflection relation between the impact parameter $b$ and the scattering angle $\theta$, with its dependence on the relative speed $|q|$ and be smooth enough.

We decompose the impact parameter into a scale part and a normalized angular shape:
\begin{equation}
    \label{eq:decompositb}
    b(|q|,\theta)=b_0(|q|)h(|q|,\theta),\qquad b_0(|q|)\geq0,\quad h(|q|,0)=1,\quad h(|q|,\pi)=0,
\end{equation}where $b_0$ represents the maximal impact parameter and $h$ describes the normalized angular dependence. $h(|q|,\pi)=0$ is obtained by $b(|q|,\pi)=0$, corresponding to the fact that a collision with $b=0$ will cause a direct rebound. Then we will approximate $b_0, h$ respectively.

Since our target is to learn a smooth $b(|q|,\theta)$, we assume that for each fixed $|q|$, ($b(|q|,\theta)$ is monotonically decreasing in $\theta$ to ensure the smooth. Then substituting~\eqref{eq:sigma} and~\eqref{eq:decompositb} into~\eqref{eq:sigmatotal} obtains:

\begin{equation}
\label{eq:sigmatotalsimpl}
    \sigma_{\text{total}}(|q|)=\int_{\mathbb S^2}\sigma(|q|,\theta)\mathrm{d}n =2\pi\int_0^\pi\frac{b(|q|,\theta)}{\sin\theta}\left|\frac{\partial b}{\partial \theta}(|q|,\theta)\right|\sin\theta\mathrm{d}\theta =\pi b_0(|q|)^2 .
\end{equation}

Therefore, the scale function $b_0(|q|)$ is thoroughly determined by $(|q_j|,\sigma_{\text{total}}(|q_j|))_{j=1}^{N_2}$ and for each sample $(|q_j|,\sigma_{\mathrm{total}}(|q_j|))$ we define:
\begin{equation*}
    R_j=\sqrt{\frac{\sigma_{\mathrm{total}}(|q_j|)}{\pi}}.
\end{equation*}as the observation of $b_0(|q|)$.

Then we choose a positive B-spline model
\begin{equation}
\label{eq:approximatb0}
    b_{0,a}(|q|)=\exp\left(\sum_{m=0}^{M_0-1}a_m\xi_m(u(|q|))\right),\qquad u(|q|)=\frac{\log |q|-\log |q|_{\min}}{\log |q|_{\max}-\log |q|_{\min}},
\end{equation}
to approximate $b_0(|q|)$, where $\xi_m$ is B-spline, $|q|_{\min}, |q|_{\max}$ are the lower and upper cutoffs of $|q|$ in $[0,+\infty)$. The coefficient $a$ is fitted by least-squares loss on the log scale:
\begin{equation}
\label{eq:Lscale}
    \mathcal{L}_{\text{scale}}(a)=\frac1{N_2}\sum_{j=1}^{N_2}\left(\log b_{0,a}(|q_j|)-\log R_j\right)^2 .
\end{equation}

Next, we approximate the normalized shape $h(|q|,\theta)$. Let $x=\theta/\pi$ and choose basis functions
\begin{equation*}
    \chi_l(x)=x(1-x)P_{l-1}(2x-1),\qquad l=1,\dots,L.
\end{equation*} so $\chi_l(0)=\chi_l(1)=0,\quad l=1,\dots,L.$

We choose the following model to approximate $h(|q|,\theta) $, assuming a $q$-independent angular shape plus a weak $|q|$-dependence on $h_d(\theta)$ to capture the fluctuation with respect to $|q|$:
\begin{equation}
\label{eq:approximath}
    h_D(|q|,\theta)=h_d(\theta)+\sum_{m=0}^{M_h-1}\sum_{r=1}^{L}D_{mr}\hat{\xi}_m(u(|q|))\chi_r(x):=\hat{h}_D(u,\theta), \quad h_d(\theta)=1-x+\sum_{l=1}^{L}d_l\chi_l(x).
\end{equation}
where $\hat{\xi}_m$ are centered B-spline.

\begin{remark}
    If high accuracy is not required, we can just approximate the $h(|q|,\theta)$ by a $|q|$-independent function \(h_d(\theta)\). In practice, we first optimize \(h_d(\theta)\) and then introduce the $|q|$-dependent correction \(D\) based on this optimized baseline \(h_d(\theta)\).
\end{remark}

With $\frac{\partial b}{\partial\theta}=b_0(|q|)\frac{\partial h}{\partial\theta}$, we deduce $\rho_\theta(\theta\mid |q|)=-2h(|q|,\theta)\frac{\partial h}{\partial\theta}(|q|,\theta)$. Therefore, the empirical KL divergence for the angular samples $(|q_i|,\theta_i)_{i=1}^{N_1}$ is
\begin{equation}
\label{eq:LKL}
    \mathcal{L}_{\mathrm{KL}}(D)=-\frac1{N_1}\sum_{i=1}^{N_1}\log\left(-h_D(|q_i|,\theta_i)\frac{\partial h_D}{\partial\theta}(|q_i|,\theta_i)\right),
\end{equation}

To reduce oscillations in the reconstructed impact parameter $b$, we add a weak smoothness penalty in $|q|$. Specifically we penalize $u(|q|)$ rather than with respect to \(|q|\) directly. We define:
\begin{equation}
\label{eq:Lqcontinuous}
    \mathcal{L}_{|q|}^{\text{c}}(D)=\int_0^1\int_0^\pi\left|\frac{\partial \hat{h}_D}{\partial u}(u,\theta)\right|^2\mathrm d\theta\,\mathrm du .
\end{equation}

In practical implementation,~\eqref{eq:Lqcontinuous} is approximated by tensor-product Gauss-Legendre quadrature. Let \(\{u_a,w_a^u\}_{a=1}^{N_u}\) be quadrature nodes and weights on \([0,1]\), and \(\{\theta_b,w_b^\theta\}_{b=1}^{N_\theta}\) be quadrature nodes and weights on \([0,\pi]\). We use the discrete penalty
\begin{equation}
\label{eq:Lq} 
    \mathcal{L}_{|q|}(D)=\sum_{a=1}^{N_u}\sum_{b=1}^{N_\theta}w_a^u w_b^\theta\left|\frac{\partial h_D}{\partial u}(u_a,\theta_b)\right|^2 ,
\end{equation}to prevent producing non-physical oscillations in $|q|$ direction.

Eventually the total loss is:
\begin{equation}
\label{eq:LTOTAL}
    \mathcal{L}(a,D)=\mathcal{L}_{\text{scale}}(a)+\mathcal{L}_{\mathrm{KL}}(D)+ \alpha_{|q|} \mathcal{L}_{|q|}(D).
\end{equation}where $\alpha_{|q|}$ is a hyperparameter.

After $b_{0,a}$ and $h_D$ are obtained, 
the eigenvalues $\lambda_{\ell}(|q|)$ are computed directly:
\begin{equation}
\label{eq:eigenvalues}
    \lambda_{\ell}(|q|)
    =
    2\pi |q| b_{0,a}(|q|)^2
    \int_0^\pi
    h_D(|q|,\theta)
    \left|
    \frac{\partial h_D}{\partial\theta}(|q|,\theta)
    \right|
    \left(
    1-P_{\ell}(\cos\theta)
    \right)
    \mathrm{d}\theta,
\end{equation}while $\sigma_{\mathrm{total}}(|q|)$ are calculated by~\eqref{eq:sigmatotalsimpl}. See Algorithm~\ref{algorithm:distributrans} for the sorrow procedure.

\begin{algorithm}
\floatname{algorithm}{Algorithm}
\caption{Green's function reconstruction from scattering data}
\label{algorithm:distributrans}
\begin{algorithmic}[1]
\REQUIRE Angular samples $\{(|q_i|,\theta_i)\}_{i=1}^{N_1}$, total cross section samples $\{(|q_j|,\sigma_{\mathrm{total}}(|q_j|))\}_{j=1}^{N_2}$ and target relative velocity $\{|q_k|\}^{N_3}_{k=1}$.

\STATE Construct the positive scale model $b_{0,a}(|q|)$ and the normalized shape model $h_D(|q|,\theta)$ by~\eqref{eq:approximatb0} and~\eqref{eq:approximath} respectively.

\STATE Determine the parameter \(a,D\) by minimizing~\eqref{eq:LTOTAL}

\STATE For $\{|q_k|\}^{N_3}_{k=1}$, compute the eigenvalues and the total cross section by~\eqref{eq:sigmatotalsimpl} and~\eqref{eq:eigenvalues}.

\RETURN
\(
    \{\lambda_{\ell}(|q_k|)\}_{\ell=0}^{L_{\max}},\sigma_{\text{total}}(|q_k|).
\)
\end{algorithmic}
\end{algorithm}

\subsection{Conditional Monotone Flow for Sampling}
\label{subsec:NF}

From~\eqref{eq:decomposGreen}, the Green's function is decomposed into the non-collisional part \(G^{\mathrm d}\) and the collisional part \(G^{\mathrm c}\). The former corresponds to the event of no collision and can be sampled directly. Therefore, it remains to construct a sampler for \(G^{\mathrm c}\).

Since \(G^{\mathrm c}\) admits a spectral expansion, in practice we truncate it at \(L_{\max}\):
\begin{equation*}
    G^c_{L_{\max}}(t, \mu; |q|) =\sum\limits_{\ell=0}^{L_{\max}}\frac{2\ell+1}{4\pi}\Big(\exp(-\lambda_\ell(|q|) t)-\exp(-K(|q|)t)\Big)P_\ell(\mu).
\end{equation*}
The convergence and positivity of $G^c_{L_{\max}}$ are established in section~\ref{sec:truncation}. Hence, we just need to construct a sampler for $G^c_{L_{\max}}$.

Here we choose Normalizing Flow (NF) as the sampler. NF represents a complex distribution as the push-forward of a simple reference distribution through an invertible transformation \cite{RezendeM15,George2021}. Let \(z\sim p_0\) and \(y=T_\Theta(z)\). The induced density is given by the change-of-variables formula:
\begin{equation*}
    \log p_\Theta(T_\Theta(z))=\log p_0(z)-\log\left|\det\frac{\partial T_\Theta}{\partial z}(z)\right|.
\end{equation*}

In our setting, the target density is not given by samples. For each condition $c=(|q|,\Delta t)$, $G^c_{L_{\max}}$ is already available by subsection~\ref{subsec:optimization}. Hence we train a conditional flow $y=T_\Theta(z;c)$ by minimizing the KL divergence from the generated density to the known target density. Since the Green's function is defined on \(x\in(-1,1)\), we use the unbounded variable $x=\tanh(y/2)$. Therefore the target density in the \(y\) variable is
\begin{equation}
\label{eq:ytargetdensity}
    \pi_c(y)=G_{L_{\max}}^{\mathrm c}\left(\Delta t,\tanh(y/2); |q|\right)\frac12\left(1-\tanh^2(y/2)\right).
\end{equation}
Up to a constant independent of \(\Theta\), the training loss is
\begin{equation*}
    \mathcal L(\Theta)=\mathbb E_{c,z}\left[-\log \pi_c(T_\Theta(z;c))-\log\left|\frac{\partial T_\Theta}{\partial z}(z;c)\right|\right],\qquad z\sim p_0 .
\end{equation*}

The conditional flow follows the architecture shown in Figure~\ref{fig:networkconstruction}. Given the condition \(c=(|q|,\Delta t)\), a condition encoder produces an embedding \(e(c)\) then used by FiLM-conditioned parameter networks \cite{perez2018film} to generate the parameters of flow layer. The flow then consists of a conditional Sinh-Arcsinh-type transformation, \(L\) strictly monotone tanh residual layers, another conditional Sinh-Arcsinh-type transformation, and a conditional affine head. The monotone layers are designed so that their derivatives are strictly positive, which guarantees invertibility and gives an explicit log-Jacobian. The conditional Sinh-Arcsinh type transformations is motivated by thought in \cite{jones2009sinh} to improve the representation of skewness and tail behavior. See Appendix~\ref{app:nf} for the detailed of the network architecture.

After training, $G_{L_{\max}}^{\mathrm c}$ are sampled by drawing \(z^{(i)}\sim p_0\) and computing:
\[
    x^{(i)}=\tanh(T_\Theta(z^{(i)};c)/2).
\]

\begin{remark}
For NF training, \(G^c_{L_{\max}}\) is converted into a normalized density in \(\mu=\cos\theta\). The factor \(2\pi\) is included to ensure that the resulting density integrates to one on \([-1,1]\). For notational simplicity, this normalization factor is omitted.
\end{remark}

\begin{figure}[!t]
    \centering
    \includegraphics[height=65mm,width=100mm]{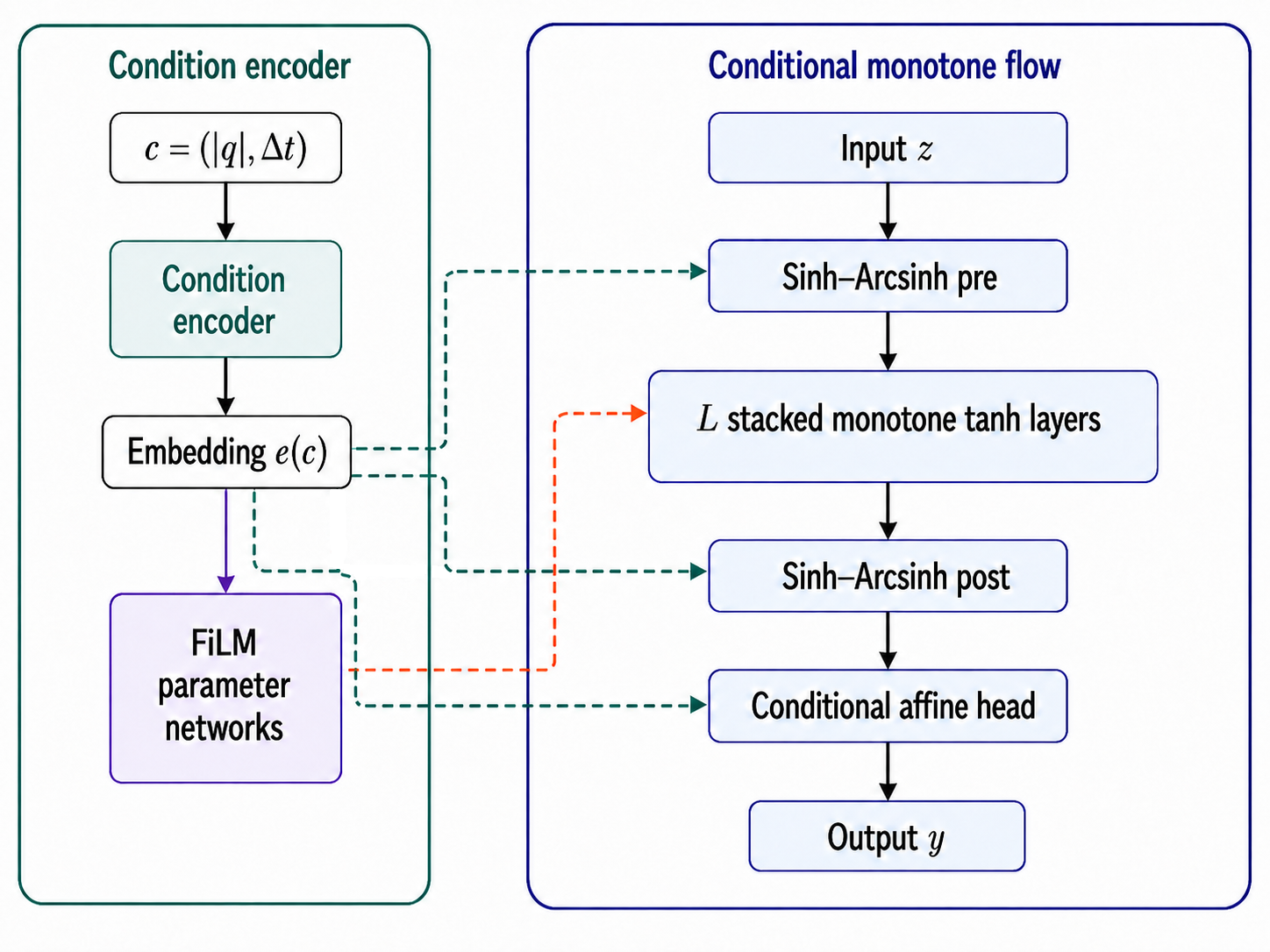}
    \label{fig:networkconstruction}
    \caption{Network Construction}
\end{figure}

\subsection{Boltzmann solver}\label{subsec:solveBolt}

In this subsection we illustrate how to assemble subsection~\ref{subsec:optimization} and~\ref{subsec:NF} into the fourth step of Algorithm~\ref{Boltzmannsphere} to solve spatially homogeneous Boltzmann equation and applied to spatially non-homogeneous Boltzmann equation.

With sampler of $G^c_{L_{\max}}$ constructed in subsection~\ref{subsec:NF} and non-collision case, we can sample the scattering angle $\theta$ under the proportion of two cases and from $\text{UNIF}[0,2\pi)$ sample azimuthal angle $\varphi$ due to the rotational symmetry. Subsequently we can use $n,\theta,\varphi$ to reconstruct $n'$. Here the proportion of non-collision and collision case is $\exp\left(-K(|q|)\Delta t\right):1-\exp\left(-K(|q|)\Delta t\right)$ from~\eqref{eq:decomposGreen}. See Algorithm~\ref{algorithm:Greensample} for the detailed procedure of the fourth step of Algorithm~\ref{Boltzmannsphere}.

\begin{algorithm}
\floatname{algorithm}{Algorithm}
\caption{Green's function sampling}
\label{algorithm:Greensample}
\begin{algorithmic}[1]
\REQUIRE Time step $\Delta t$, relative velocity $q$, pre-collision direction $n$, trained NF model $T_{\Theta}$, optimized $\sigma_{\text{total}}$.
\STATE Sample $p$ from $\text{UNIF}[0,1)$ 

\IF{$p<\exp\left(-K(|q|)\Delta t\right)$}
    \RETURN $n$.
\ENDIF

\STATE Sample $\theta$ by NF with $T_{\Theta}$ and condition $c=(|q|,\Delta t)$; sample $\varphi$ from $\text{UNIF}[0,2\pi)$.

\STATE Choose two unit vectors \(e^{(1)},e^{(2)}\) such that\(\{n,e^{(1)},e^{(2)}\}\)
forms an orthonormal basis. Calculate the post-collision direction
\[
    n'=\cos\theta n+\sin\theta\left(\cos\varphi e^{(1)}+\sin\varphi e^{(2)}\right).
\]

\RETURN $n'$.
\end{algorithmic}
\end{algorithm}

For spatially non-homogeneous cases, we adopt a time-splitting approach that decouples the Boltzmann equation to collision step and the advection step:
\begin{subequations}
\begin{align}
    \partial_t f = Q(f, f), \label{eq:collpart} \\
    \partial_t f + v \cdot \nabla_x f = 0. \label{eq:advectpart}
\end{align}
\end{subequations}Then the well-known PIC algorithm is employed \cite{bailo2024collisional, serikov1999particle}. As a preparatory step for the algorithm, the spatial domain \(\Omega\) is divided into a set of grids \(\Omega_k\). For~\eqref{eq:collpart}, we use Algorithm~\ref{Boltzmannsphere} to update particle velocities in each grid, preserving the kinetic energy \(\mathcal{E}_K\). For~\eqref{eq:advectpart}, it is just pure transport with constant velocity and particles positions are updated easily.

\section{Estimates of the truncated Green's function}\label{sec:truncation}

For particle simulation, a finite spectral approximation of the Green's function must be adopted. We study in this section the truncation of the Green's function (used in the training of NF). We restrict the analysis in this section to the case with finite total cross section (in the method the reconstructed impact-parameter model has a finite maximal scale $b_0(|q|)$ so that the total cross section is finite). 

% Although $B_{\text{eff}}(|q|, \cos \theta)$ needs not be reconstructed explicitly in the numerical procedure, it is useful to introduce it in the analysis as the collision kernel induced by the learned impact-parameter relation. We define
% \begin{equation*}
%     B_{\text{eff}} (|q|, \cos \theta)=|q| \frac{b_{0,a}(|q|)^2h_D(|q|, \theta)}{\sin \theta} \bigg | \frac{\partial h_D(|q|, \theta)}{\partial \theta}\bigg|.
% \end{equation*}
% Then
% \begin{equation*}
%     K_{\text{eff}} = 2\pi\int_{[-1,1]}B_{\text{eff}}(|q|,\mu)\mathrm{d}\mu=|q|\sigma_{\text{total}}(|q|)<\infty,
% \end{equation*}
% and the eigenvalues computed by Algorithm~\ref{algorithm:distributrans} satisfy
% \begin{equation*}
%     \lambda_\ell(|q|)=2\pi\int_{[-1,1]}B(|q|,\eta)(1-P_\ell(\mu))\mathrm{d}\mu.
% \end{equation*}

% For notational simplicity, we write write $B$ and $K$ for $B_{\text{eff}}$ and $K_{\text{eff}}$ throughout this section.

Recalling the decomposition for Green's function $G$ in \eqref{eq:decomposGreen}, the atomic component is nonnegative and it suffices to sample from the continuous component. It remains to study the truncated kernel
\begin{equation*}
    G^c_N( t, |q|,\mu) =\sum\limits_{\ell=0}^{N}\frac{2\ell+1}{4\pi}\Big(\exp(-\lambda_\ell(|q|) t)-\exp(-K(|q|)t)\Big)P_\ell(\mu).
\end{equation*}
Here $\lambda_\ell\ge 0$ is the opposite of the eigenvalue and $K \ge 0$ is the $\ell \to \infty$ limit of $\lambda_\ell$ (see subsection \ref{subsec:green}).

\begin{remark}
    In this subsection, \(N\) represents the truncation order of the expansion of \(G^c\), rather than the number of particles. In other subsections we use \(L_{\max}\) to denote this quantity.
\end{remark}
% and $B$ is the collision kernel related to the reconstructed impact-parameter model.

For convenience, we suppress the dependence on $|q|$ in the notation. The results below will be uniform for a compact interval $[q_1, q_2]\subset (0,\infty)$.
The estimates could blow up for $q$ close to zero, which however, is not important for practical simulations. 

In the following, we assume that $B\ge 0$, $B \not\equiv 0$ and $B \in L^2([-1,1])$.
Under the assumption above, one could consider the Legendre expansion for $B$ as $B(\mu) = \sum\limits_{\ell = 0}^\infty B_\ell P_\ell(\mu)$ in $L^2$-sense. Then
$K=4\pi B_0$ and denote 
\[
\Lambda_\ell = K- \lambda_\ell = \frac{4\pi}{2\ell+1} B_\ell.
\]
Since $B\in L^2([-1, 1])$, one then has $\sum\limits_{\ell}\frac{2}{2\ell+1}B_{\ell}^2<\infty$. With the fact
\begin{equation}\label{eq:exp_estimate}
    \left|\exp(-\lambda_\ell(|q|) t)-\exp(-K(|q|)t)\right|\le |\lambda_\ell-K| t=|\Lambda_\ell| t,
\end{equation}
it is straightforward to see that $G^c(t, |q|, \cdot)\in C([0, T]; L^2([-1, 1]))$ and thus it can be normalized to a probability density on $\mathbb{S}^2$. Moreover, since
\[
\int_{-1}^1 G^c \,\mathrm{d}\mu=\frac{1}{2\pi}(1-e^{-Kt}),
\]
Hence,
$t\mapsto p^c(t, \mu):=\frac{G^c}{\|G^c\|_{1}}$ (and thus the normalized probability density on $\mathbb{S}^2$ corresponding to $G^c$)
is also regularly behaved at $t=0$.

We first note the following simple fact.
\begin{lemma}\label{lemma:positivity}
The normalized probability density $p^c(t, \mu)$ is positive for $t>0$ and has a positive lower bound on any time interval $[\delta, T]$ with any $0<\delta<T$.
\end{lemma}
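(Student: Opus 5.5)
The plan is to use the probabilistic (Dyson/Duhamel) representation of the semigroup generated by $\Omega_J$ instead of the Legendre series. With finite total cross section, $\Omega_J = \mathcal{B} - K$, where $\mathcal{B}g(n)=\int_{\mathbb{S}^2}B(n\cdot n')g(n')\,\mathrm{d}n'$ is a bounded, positivity-preserving integral operator on $L^2(\mathbb S^2)$. Hence
\[
e^{t\Omega_J}=e^{-Kt}e^{t\mathcal B}=e^{-Kt}\sum_{k\ge 0}\frac{t^k}{k!}\mathcal B^k .
\]
The $k=0$ term is the atom $G^{\mathrm d}$. Comparing spectra on each $Y_{\ell m}$ via Funk--Hecke (the eigenvalue of $\mathcal B$ is $\Lambda_\ell=K-\lambda_\ell$) identifies
\[
G^{\mathrm c}(t,\mu)=e^{-Kt}\sum_{k\ge1}\frac{t^k}{k!}B^{(k)}(\mu),
\]
where $B^{(k)}$ is the $k$-fold spherical convolution kernel of $B$. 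Each $B^{(k)}\ge 0$, since $B\ge0$. The series converges in $L^2$ uniformly for $t\in[0,T]$ because $\|\mathcal B\|\le K$.

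Next I show that some iterate $B^{(k_0)}$ is bounded below by a positive constant on all of $[-1,1]$. Since $B\ge0$ and $B\not\equiv0$, there is an arc in the angle variable on which $B\ge c>0$ on a set of positive measure. Spherical convolution of two kernels that are positive near angles $\alpha$ and $\beta$ produces a kernel that is positive (and continuous, because convolution of $L^2$ kernels on the sphere is continuous) on the range of angles $[|\alpha-\beta|,\alpha+\beta]$ up to an open neighborhood. Repeated convolution therefore spreads the support. After finitely many steps the support covers all angles $\theta\in[0,\pi]$, and by continuity and compactness we get $B^{(k_0)}(\mu)\ge c_0>0$ everywhere.

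This covering step is the main obstacle. The difficulty is when the positive region of $B$ sits near $\theta=0$ or $\theta=\pi$ only. I would handle it by first taking $B^{(2)}$, which is positive on a neighborhood of $\theta=0$ (the self-return angle). Convolving powers of that kernel then grows the neighborhood of $0$ additively in angle until it reaches $\pi$.

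To finish, I bound
\[
G^{\mathrm c}(t,\mu)\ge e^{-Kt}\frac{t^{k_0}}{k_0!}B^{(k_0)}(\mu)\ge e^{-KT}\frac{\delta^{k_0}}{k_0!}c_0>0
\]
for $t\in[\delta,T]$, and positivity holds for every $t>0$. Dividing by $\|G^{\mathrm c}\|_1=\frac{1}{2\pi}(1-e^{-Kt})\le \frac{1}{2\pi}$ gives the same type of lower bound for $p^{\mathrm c}$. The lower bound is uniform for $|q|$ in a compact range provided $B$ depends continuously on $|q|$.
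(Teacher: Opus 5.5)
Your proof is correct, but it establishes the key step by a different route from the paper.

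\textbf{What is shared.} Both arguments use the same compound-Poisson (Dyson) expansion $G^{\mathrm c}=e^{-Kt}\sum_{k\ge1}\frac{t^k}{k!}B^{(k)}$. Both finish by discarding all but one nonnegative term $k_0$. The difference is in how you show that some iterate $B^{(k_0)}$ is bounded below uniformly.

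\textbf{The paper's route.} The paper argues spectrally. With $a_\ell=\Lambda_\ell/K$, it shows $|a_\ell|<1$ for $\ell\ge1$ and $a_\ell\to0$, so $\rho=\sup_{\ell\ge1}|a_\ell|<1$. Parseval then gives $\|p^{(*r)}-\frac{1}{4\pi}\|_\infty\le \|p\|_2^2\,\rho^{r-2}$. In other words, the iterated single-collision density mixes geometrically fast to the uniform density, and $r_*$ is chosen so that the error is below $\frac{1}{8\pi}$.

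\textbf{Your route.} You use an irreducibility (support-spreading) argument instead.
\begin{enumerate}
\item $B^{(2)}$ is continuous, since it is a convolution of $L^2$ zonal kernels.
\item It satisfies $B^{(2)}(1)=\int_{\mathbb S^2}B(n\cdot\xi)^2\,\mathrm d\xi>0$, so it is positive on a cap $\{\theta<\epsilon\}$.
\item Choosing $\xi$ on the geodesic between $n$ and $n'$ shows that the $m$-fold convolution of $B^{(2)}$ is positive on $\{\theta<m\epsilon\}$.
\item It is therefore positive everywhere once $m\epsilon>\pi$, and compactness gives $c_0$.
\end{enumerate}

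\textbf{What each buys.} Your argument is more elementary: it needs no spectral gap and no tail bound. It also makes the mechanism transparent. The paper's argument is quantitative: $r_*$ is controlled by $\rho$ and $\|p\|_2$, and one gets convergence to equilibrium along the way. This is convenient if one wants the bound uniform in $|q|$. In your version, $k_0\approx\pi/\epsilon$ depends on the size of the positivity cap of $B^{(2)}$, which is harder to track.

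\textbf{Two small suggestions.}
\begin{itemize}
\item Commit to the $B^{(2)}$ route from the start. Your first sketch is imprecise for an $L^2$ function, which is positive only on a set of positive measure, not on an arc. It also says positivity propagates to $[|\alpha-\beta|,\alpha+\beta]$, which must be capped when $\alpha+\beta>\pi$. The $B^{(2)}$ argument covers every case and makes that sketch unnecessary.
\item State why $G^{\mathrm c}$ dominates a single term pointwise. The series converges in $L^2$ and has nonnegative terms, so its $L^2$ limit coincides almost everywhere with the monotone pointwise limit.
\end{itemize}
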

\begin{proof}
Denote $p(\mu) =p(n,n'):= B(\mu)/K$ the normalized single-collision density. By the compound-Poisson expansion
\begin{equation}\label{eq:Bpoisson}
    p^c(t,\mu) = \frac{2\pi \exp(-Kt)}{1-e^{-Kt}}\sum_{r=1}^\infty \frac{t^r}{r!}B^{(*r)}, \text{ with } B^{(*r)} = K^{r}p^{(*r)},
\end{equation}
where $p^{(*1)} = p$ and 
\begin{equation*}
    p^{(*(r+1))} (n,n') = \int_{\mathbb S^2} p^{(*r)} (n,\xi)p(\xi,n')d\xi
\end{equation*}
describes the probability of transferring from $n$ to $n'$ with exact $r$ angular scattering.

The Legendre moment is given by
\begin{equation*}
    a_\ell = \int_{\mathbb S^2} p(n, n') P_\ell(n\cdot n')\mathrm d n' = \frac{\Lambda_\ell}{K}, \quad \text{ with } a_0 = 1.
\end{equation*}
For fixed $\ell \geq 1$, one has $|a_\ell|\leq \int p|P_\ell| <1$ since $\{\mu: |P_\ell(\mu)| = 1\}$ is a null set. Moreover, since $p\in L^2(\mathbb S^2)$, with Cauchy-Schwarz inequality,
\begin{equation*}
    |a_\ell| \leq \|p(n, \cdot)\|_2 \|P_\ell(n,\cdot)\|_2=\|p\|_2\sqrt{\frac{4\pi}{2\ell+1}} \to 0,
\end{equation*}
which implies $\rho:=\sup_{\ell\geq 1} |a_\ell|<1$.
Spherical harmonic addition theorem \cite{jackson2012classical} along with summarization implies
\begin{equation*}
    p^{(*r)}(n,n') =\frac{1}{4\pi} + \sum_{\ell = 1}^\infty \frac{2\ell+1}{4\pi} a_\ell^r P_\ell (n\cdot n').
\end{equation*}
Since $p \in L^2$, then Parseval's identity gives
$ A:= \sum\limits_{\ell = 1}^\infty \frac{2\ell+1}{4\pi} |a_\ell|^2 < \infty$.
Hence for $r\geq 2$, there exists
\begin{equation*}
    \left\|p^{(*r)}-\frac{1}{4\pi}\right\|_\infty = \text{ess} \sup \left| p^{(*r)}(n,n')-\frac{1}{4\pi}\right|\leq \rho^{r-2}\sum_{\ell = 1}^\infty \frac{2\ell+1}{4\pi} |a_\ell|^2=A\rho^{r-2}.
\end{equation*}

Choose $r_*$ such that $A\rho^{r_*-2}< 1/(8\pi)$, then $p^{(*r_*)}(n,n')>1/(8\pi)$ for all $n$ and $n'$. Along with~\eqref{eq:Bpoisson}, one has

\begin{equation*}
    p^c(t,\mu) \geq \frac{2\pi}{1-\exp(-KT)} \exp(-KT) (K\delta)^{r_*}/(8\pi(r_* !))>0, \quad\forall \mu \text{ and } t\in[\delta,T],
\end{equation*}
which concludes the proof.
\end{proof}

Next, we first establish convergence and truncation-error estimates for $G_N^c$, and then investigate conditions under which the truncated continuous component remains nonnegative.

\begin{theorem}[Convergence theorem]
    Under the assumption above, one has the following convergence for the truncated continuous component of the Green's function  for $t\in[0,T]$
    \begin{equation*}
        \|G^c-G^c_N\|_2+
        \|p^c-p_N^c\|_1  \to 0,\quad N\to\infty,
    \end{equation*}
where $p_N^c:=G^c_N/\|G_N^c\|_1$ and $S_NB(\mu)=\sum_{\ell=0}^N B_\ell P_\ell(\mu)$.

Additionally, if the Legendre projections of $B$ converge uniformly to $B$, i.e.
 \begin{equation*}
        \|B-S_NB\|_\infty \to 0,
\end{equation*}
then there exists a  $L^\infty$-error estimation
\begin{equation}\label{eq:trunerr}
        \|G^c-G^c_N\|_\infty \leq T\|B-S_NB\|_\infty + \pi T^2\|B-S_NB \|_2^2 \to 0.
\end{equation}
Moreover, for each $\delta\in (0, T)$, there exists $N$ such that for all $n>N$, the truncation $G^c_n(t,\mu)$ and thus $p_n^c$ are positive for $t\in[\delta,T]$.
\end{theorem}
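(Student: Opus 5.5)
Write $c_\ell(t):=\exp(-\lambda_\ell t)-\exp(-Kt)$. Since $\lambda_\ell = K-\Lambda_\ell$, we have $c_\ell(t)=e^{-Kt}(e^{\Lambda_\ell t}-1)$. The whole argument will be a comparison of $c_\ell$ with its first-order term $t\Lambda_\ell$, which is the Legendre coefficient structure of $tB$, because
\[
\frac{2\ell+1}{4\pi}\,\Lambda_\ell = B_\ell .
\]

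\textbf{$L^2$ convergence.} By orthogonality of the Legendre polynomials,
\[
\|G^c-G^c_N\|_2^2=\sum_{\ell>N}\Big(\frac{2\ell+1}{4\pi}\Big)^2\frac{2}{2\ell+1}\,c_\ell(t)^2 .
\]
Using \eqref{eq:exp_estimate}, $|c_\ell(t)|\le T|\Lambda_\ell|$, so this is bounded by
\[
T^2\sum_{\ell>N}\frac{2}{2\ell+1}B_\ell^2 ,
\]
which is the tail of a convergent series (since $B\in L^2$). Hence it tends to $0$ uniformly in $t\in[0,T]$.

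\textbf{$L^1$ convergence of the normalized densities.} The normalizations agree: $\int_{-1}^1 P_\ell\,\mathrm d\mu=0$ for $\ell\ge1$, so $\int G^c_N\,\mathrm d\mu=\int G^c\,\mathrm d\mu=\frac{1}{2\pi}(1-e^{-Kt})$. I will write
\[
p^c-p^c_N=\frac{G^c-G^c_N}{\|G^c\|_1}+G^c_N\Big(\frac{1}{\|G^c\|_1}-\frac{1}{\|G^c_N\|_1}\Big),
\]
and control both pieces through $\|G^c-G^c_N\|_1\le\sqrt2\,\|G^c-G^c_N\|_2$. Near $t=0$ the factor $1/\|G^c\|_1\sim 1/(Kt)$ blows up. To handle this I divide by $t$ first: the same estimate gives $\|G^c-G^c_N\|_2/t\le(\sum_{\ell>N}\frac{2}{2\ell+1}B_\ell^2)^{1/2}$, while $\|G^c\|_1/t\ge \frac{1}{2\pi}(1-e^{-KT})/T$ because $G^c\ge0$ by Lemma~\ref{lemma:positivity}. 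This yields convergence uniform in $t\in(0,T]$.

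\textbf{$L^\infty$ estimate.} I split $c_\ell = t\Lambda_\ell + r_\ell$. The linear part sums to
\[
\sum_{\ell>N}\frac{2\ell+1}{4\pi}\,t\Lambda_\ell P_\ell = t\,(B-S_NB),
\]
whose sup norm is at most $T\|B-S_NB\|_\infty$. For the remainder, use $|e^{-\lambda t}-e^{-Kt}-t(K-\lambda)|\le \frac{t^2}{2}(K-\lambda)^2$; this holds because both exponents are nonpositive, so the second derivative of $s\mapsto e^{-s}$ is bounded by $1$ on the relevant interval. Together with $|P_\ell|\le1$ this gives
\[
\sum_{\ell>N}\frac{2\ell+1}{4\pi}\,\frac{t^2}{2}\Lambda_\ell^2=\sum_{\ell>N}\frac{2\pi t^2}{2\ell+1}B_\ell^2=\pi t^2\sum_{\ell>N}\frac{2}{2\ell+1}B_\ell^2=\pi t^2\|B-S_NB\|_2^2 .
\]
This is exactly the second term of \eqref{eq:trunerr}. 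It also shows that the series defining $G^c$ converges absolutely in the tail beyond its linear part, which justifies the termwise manipulation (one only needs $B-S_NB$ to make sense pointwise, which the uniform convergence assumption provides).

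\textbf{Positivity.} By Lemma~\ref{lemma:positivity}, $p^c\ge c_\delta>0$ on $[\delta,T]$. Hence $G^c=p^c\,\frac{1}{2\pi}(1-e^{-Kt})\ge c_\delta\frac{1}{2\pi}(1-e^{-K\delta})=:m_\delta>0$. Choose $N$ so that the right side of \eqref{eq:trunerr} is below $m_\delta$ for all $n>N$. Then $G^c_n>0$ on $[\delta,T]$, and so is $p^c_n$, since its normalizing constant is the same positive number. The subtle point is that the lemma's lower bound is stated for the density; I will take it as holding pointwise everywhere after identifying the continuous representative, which the $L^\infty$ convergence allows.

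\textbf{Main obstacle.} I expect the hardest step to be making the $L^1$ convergence of the normalized densities uniform down to $t=0$. The device of dividing by $t$ is what I would try first.
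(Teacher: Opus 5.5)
Your $L^2$ bound, the $L^1$ bound for the normalized densities, and the positivity argument all follow the paper. Your division by $t$ is a more explicit version of the paper's ``$o(1)\|G^c\|_1$''; note that $\|G^c\|_1\ge|\int G^c\,\mathrm d\mu|$ already gives the lower bound without Lemma~\ref{lemma:positivity}.

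The $L^\infty$ step, however, rests on a false inequality. Expanding $s\mapsto e^{-s}$ about $s=Kt$ gives, for some $\xi$ between $\lambda t$ and $Kt$,
\[
e^{-\lambda t}-e^{-Kt}=t\,e^{-Kt}(K-\lambda)+\frac12 e^{-\xi}\,t^2(K-\lambda)^2 ,
\]
so the linear coefficient is $e^{-Kt}$, not $1$. Your remainder is therefore $r_\ell=-(1-e^{-Kt})\,t\Lambda_\ell+O(t^2\Lambda_\ell^2)$, which is first order in $\Lambda_\ell$. The claimed bound $|r_\ell|\le\frac{t^2}{2}\Lambda_\ell^2$ fails precisely in the tail $\ell>N$, where $\Lambda_\ell=K-\lambda_\ell\to0$. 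For example, with $K=t=1$ and $\Lambda=0.1$, the left side is about $0.061$ and the right side is $0.005$. Summing honestly, your remainder estimate would need control of $\sum_{\ell>N}(1-e^{-Kt})\,t\,|B_\ell|$. Nothing in the hypotheses makes $\sum_\ell|B_\ell|$ finite; only $\sum_\ell B_\ell^2/(2\ell+1)<\infty$ and uniform convergence of $S_NB$ are available.

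The fix is the paper's decomposition: take the linear part to be $t e^{-Kt}\Lambda_\ell$, i.e.\ write $G^c_N=t\,e^{-Kt}S_NB+R_N$ with $R_N$ the second-order remainder. Your own justification then applies: the second derivative $e^{-s}$ is at most $1$ on $[0,\infty)$, since $\lambda_\ell t,\,Kt\ge0$. Equivalently, the paper uses $|e^x-1-x|\le\frac{x^2}{2}e^{|x|}$ together with $|\Lambda_\ell|\le K$. Either way you get $|c_\ell-t e^{-Kt}\Lambda_\ell|\le\frac{t^2}{2}\Lambda_\ell^2$, so the tail of $R_N$ has sup norm at most $\pi t^2\|B-S_NB\|_2^2$. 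The linear part $t e^{-Kt}(B-S_NB)$ still has sup norm at most $T\|B-S_NB\|_\infty$. With this correction \eqref{eq:trunerr} holds exactly as stated, and your positivity argument goes through unchanged.
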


\begin{proof}  For the $L^2$-error estimation for the Green's function, since $G^c(t, |q|, \cdot)\in C([0, T]; L^2([-1, 1]))$, one can use the Parseval's identity  fixed $t$ with the fact~\eqref{eq:exp_estimate} to show
\begin{equation*}
    \|G^c-G^c_N\|_2^2\leq t^2 \sum_{\ell>N} \frac{2}{2\ell+1}\left(\frac{2\ell+1}{4\pi}\right)^2\Lambda_\ell^2 =  t^2 \sum_{\ell>N} \frac{2}{2\ell+1} B_\ell^2 = t^2 \|B-S_NB\|^2.
\end{equation*}
It follows that
\[
\|G_N^c-G^c\|_1\le \sqrt{2}\|G^c-G_N^c\|_2
=o(1)\|G^c\|_1,
\]
for $t\le T$. The first result then follows.

For the $L^\infty$-error estimation, for fixed $t$ a direct calculation gives 
\begin{equation*}
         G^c_N( t, \mu) =\exp(-Kt)\sum\limits_{\ell=0}^{N}\frac{2\ell+1}{4\pi}\Big(\exp\left(\Lambda_\ell t\right)-1\Big)P_\ell(\mu)        = t\exp(-Kt) S_NB(\mu)+R_N(t,\mu),
\end{equation*}
where
\begin{equation*}
    R_N(t,\mu) = \exp(-Kt)\sum\limits_{\ell=0}^{N}\frac{2\ell+1}{4\pi}\Big(\exp\left(\Lambda_\ell t\right)-1-\Lambda_\ell t\Big)P_\ell(\mu).
\end{equation*}
Here we split $G_N^c$ into linear term and second-order remainder term.

For~\eqref{eq:trunerr} it suffices to derive the error estimation for the second-order remainder term $R_N$.
Since $B\geq 0$ and $|P_\ell| \leq 1$, one has $ |\Lambda_\ell| \leq K$. Moreover, Taylor's expansion tells $|e^x-1-x|\leq  x^2e^{|x|} /2$, which further implies
\begin{equation*}
    \exp (-Kt)\left| \exp(\Lambda_\ell t)-1- \Lambda_\ell t\right|\leq \Lambda_\ell^2t^2/2.
\end{equation*}

With the inequality above it follows 
\begin{equation*}
    \|R\|_\infty \leq \sum_{\ell =0}^\infty \frac{2\ell+1}{4\pi}\frac{\Lambda_\ell^2t^2}{2}=\sum_{\ell =0}^\infty \frac{2\pi t^2 B_\ell^2}{2\ell+1}=  \pi t^2\|B\|_{L^2}^2.
    \end{equation*}
where the first equality uses the definition and the second equality uses Parseval's identity.
Hence by Weierstrass M-test, $R_N$ converges to $R$ absolutely and uniformly.

Similarly it also holds
\begin{equation*}
    \|R-R_N\|_\infty \leq \sum_{\ell > N} \frac{2\ell+1}{4\pi}\frac{\Lambda_\ell^2t^2}{2}=\sum_{\ell > N} \frac{2\pi t^2 B_\ell^2}{2\ell+1}=  \pi t^2\|B-S_NB\|_{L^2}^2.
\end{equation*}
Taking the supremum for $t\in[0, T]$ gives the desired inequality.

It remains to prove that there exists $N$ such that for all $n>N$, the truncation $G^c_n(t,\mu)$ is positive for $t\in[\delta,T]$, which is implied by Lemma \ref{lemma:positivity} and the uniform convergence of $G_N^c$.
\end{proof}

We should note here that there are many known conditions for the convergence of the Legendre projections in $L^\infty$-norm, and one could refer to \cite{wang2021much} for more details.

\section{Discussions and comments}\label{sec:discom}

In this section, we perform several discussions to deepen the understanding of the method.

\subsection{The asymptotic chaos and formal analysis}\label{subsec:asymptot}

Note that Algorithm~\ref{Boltzmannsphere} is in the form of the random batch method in \cite{jin2020random}, and enjoys the same benefit such as linear complexity, high parallel efficiency etc. The analysis for the large particle limit in \cite{jin2022mean} can then be applied without change to our method here.  Let $F^N$ be the joint distribution of the $N$ particles in our method and $F^{(N,k)}$ be the $k$-marginal. For fixed time step $\Delta t$, when $N$ is large enough, the two-particle marginal distribution $F^{(N,2)}$ can be decomposed into the form
\[
F^{(N,2)}(t_k)=f_k^{\otimes 2}+g
\]
where $g$ vanishes as $N\to\infty$, and $f_k$ satisfies the following large particle limit
\begin{algorithm}[H]
\caption{Large particle limit of the method}
\label{algorithm:largelimit}
\begin{algorithmic}
\STATE Take $\eta>0$, $t_k= k \eta$.

\FOR{$k=0,1,2,\cdots$}
\STATE Set $F_k=f_k^{\otimes 2}$
 
\STATE Evolve the linear equation
\[
\partial_t F=J F,
\]
\STATE Set $f_{k+1}=\pi F_{k+1}^-=\int F_{k+1}^-(x, y)\,dy$
\ENDFOR
\end{algorithmic}
\end{algorithm}

Then, $F^{(N,1)}(t_k)=f_k+\tilde{g}$ where $\tilde{g}$ vanishes as $N\to\infty$. It then follows that
\[
\begin{split}
F^{(N,1)}(t_{k+1})
&=\int e^{\Delta t J}F^{(N, 2)}(t_k) dv_*
=\int [e^{\Delta t J}f_k^{\otimes 2}] dv_*+r_N
=\int [e^{\Delta t J}(F^{(N,1)}(t_k))^{\otimes 2}] dv_*+\tilde{r}_N \\
&=F^{(N,1)}(t_k)+\Delta t \int J (F^{(N,1)}(t_k))^{\otimes 2} dv_*+\tilde{r}_N+o(\Delta t)\\
&=F^{(N,1)}(t_k)+\Delta t Q(F^{(N,1)}(t_k),F^{(N,1)}(t_k))+\tilde{r}_N+o(\Delta t),
\end{split}
\]
where $r_N\to 0$ as $N\to\infty$.
This formal analysis then justifies that our particle method is expected to approximate the Boltzmann evolution. Of course, rigorous analysis involves the establishment of the propagation of chaos uniform in $\Delta t$ and the regularity estimates of the density, which need further exploration in the future.

\subsection{Comparison to DSMC}\label{subsec:comparedsmc}

In this subsection, we perform comparison of the proposed method to DSMC. At the first glance, they are very similar as they both sample the post-collision relative velocities; they both have linear complexity etc. The method in Algorithm~\ref{Boltzmannsphere} is in the form of the random batch method in \cite{jin2020random}. All particles are updated according to the Green's function. In the DSMC type methods, the particles are chosen with probability to be updated according to the Green's function.  Below, we would like to make more intrinsic comparisons, in the methodology and in the situations they may apply.
\begin{enumerate}[(a)]
\item As is clear in literature \cite{bird2013dsmc,bird1976molecular}, the DSMC simulates the one-collision directly. For this reason, one needs the kernel $B$ to be normalized to probability distribution for the angle of change. The proposed method in this paper is based on the transition probability, so it models the statistical effect of the post-collision relative velocity and this is naturally a probability distribution, without any requirement on the kernel. In many DSMC algorithms, one needs to truncate the kernel so that it is normalizable to a probability distribution. Moreover, if the scattering operator is unbounded (like with differential operator in Landau equation), one needs a lot more effort to implement the DSMC, and the proposed method is advantageous.

Put it in the language of time-continuous Markov chain, the DSMC is modeling the $Q$-matrix, or the embedded discrete time Markov chain, while the proposed method is modeling the time-continuous chain with time $\Delta t$ directly. 

\item The DSMC must know the kernel $B$ to be implemented while the proposed method here only needs modeling on the Green's function so it could possibly avoid the modeling error in choosing the kernel family for $B$. 
In real applications, only the scattering data are available (and possibly with noise) so the proposed method could be beneficial and has better robustness by modeling the Green's function.

\item Compared to DSMC, sampling from the Green's function for Boltzmann equation in the general case is time-consuming. The work in \cite{medaglia2024particle} uses some surrogate so that sampling becomes feasible but the surrogate only works for the grazing regime. We make use of the normalizing flow amortized sampling so that sampling from the Green's function becomes feasible. Even with the normalizing flow amortized sampling, if the kernel $B$ is known explicitly, the proposed approach is still slightly more time-consuming compared to the traditional sampling in DSMC as the dimension is low.  As clarified, the proposed method aims for different situations (only the scattering data are available and one hopes to avoid the modeling error in selecting $B$). 
\end{enumerate}

As have explained, the proposed method and the DSMC are used for different situations, we will not compare the proposed method with DSMC in numerical experiments below.

\section{Numerical experiments}\label{sec:num}

In this section, we validate the effectiveness of our method through numerical experiments for calculating eigenvalues from data with noise, training a NF sampler and solving the Boltzmann equation with different collision kernels and initial distributions.

To visualize our particle solution and compare it with the exact (or reference) solution, we use a mollified solution \(f^N_{\epsilon}\) from the empirical measure of particle velocities as follows:
\begin{gather}
f^N_{\epsilon}(v) := \Psi_{\epsilon} \ast \bigg( \frac{1}{N} \sum_{i=1}^N \delta_{v_i} \bigg) = \frac{1}{N} \sum_{i=1}^N \Psi_\epsilon(v - v_i),
\end{gather}
where \(\Psi_{\epsilon}(v)\) is the Gaussian mollifier, $\epsilon$ is a parameter independent of solution. We choose $\epsilon=0.01$ in the following numerical experiments.

When there does not exist analytical exact solution, we use DSMC as our reference method with small $\Delta t$ and large $N$. To quanti‌fy the accuracy of our solution, we use the relative \(L_2\) error $\varepsilon_{\text{rel}}$ computed on a uniform mesh grid with center points \(v_l^c\). 

\subsection{Optimization and NF Learning}
\label{subsec:optimiznf}

We first test Algorithm~\ref{algorithm:distributrans} by examining the accuracy of the reconstructed collision-kernel eigenvalues \(\lambda_{\ell}(|q|)\) at prescribed relative speeds \(|q|\).

As benchmark examples, we use the VSS model \cite{koura1991variable}, for which
\begin{equation}
\label{eq:vss}
    \sigma(|q|,\theta)
    =
    C\frac{\beta+2}{2}
    |q|^{\alpha-1}
    \cos^\beta\left(\frac{\theta}{2}\right),
    \qquad
    \sigma_{\mathrm{total}}(|q|)
    =
    4\pi C |q|^{\alpha-1}.
\end{equation}
We consider two test cases with $C=\frac{1}{4\pi}$ commonly, and $(\alpha,\beta)$ are set as $(0,0),(0.384,1.3248)$ respectively. The first case corresponds to an isotropic Maxwell molecule model, while the second one is the standard VSS parameter set for argon \cite{koura1991variable}.

For each test case, we generate \(100,000\) angular samples and \(100,000\) total cross section samples on the relative speed interval \([0.05,10]\) with noise added to the data. During fitting we use $L=4, M_0=10, M_h=5, \alpha_{|q|}=5$. The resulting finite-dimensional optimization problem is solved by L-BFGS-B.

\begin{figure}[!t]
        \centering
        \subfigure[$\alpha=0,\beta=0$]{
        \centering
        \includegraphics[height=130mm,width=70mm]{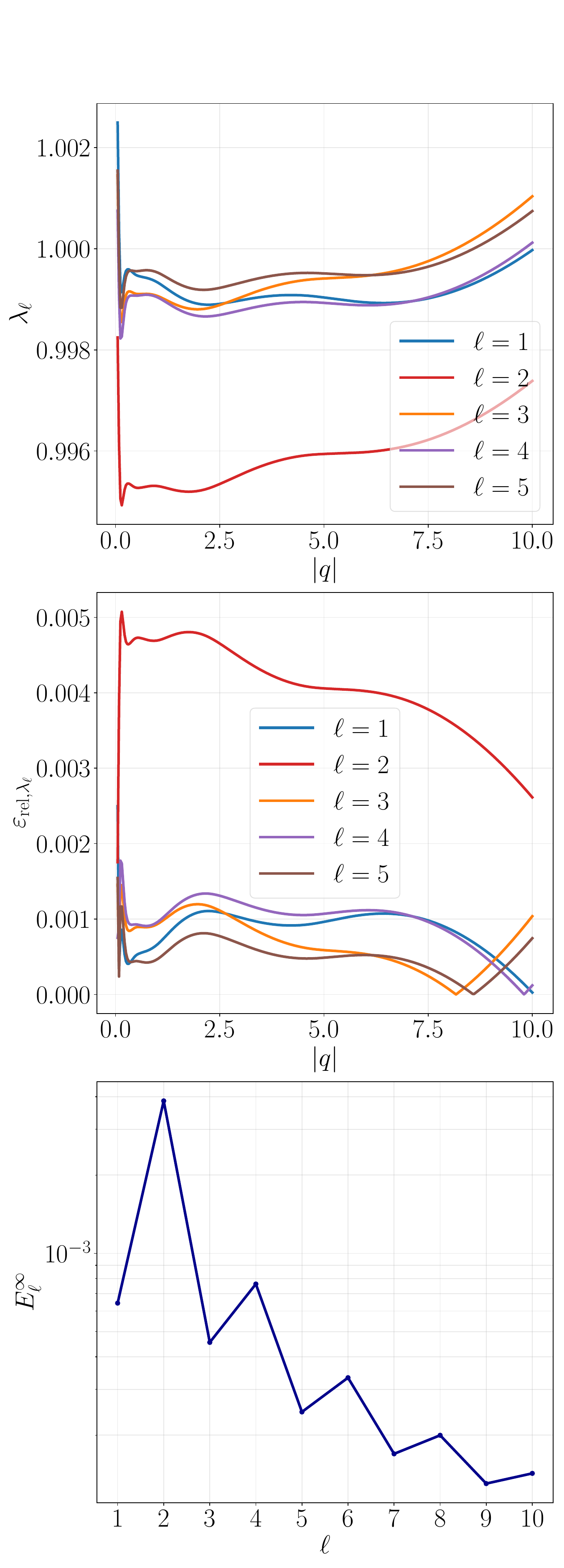}
        \label{fig:alpha0beta0optimiz}
    }
        \centering
        \subfigure[$\alpha=0.384,\beta=1.3248$]{
        \centering
        \includegraphics[height=130mm,width=70mm]{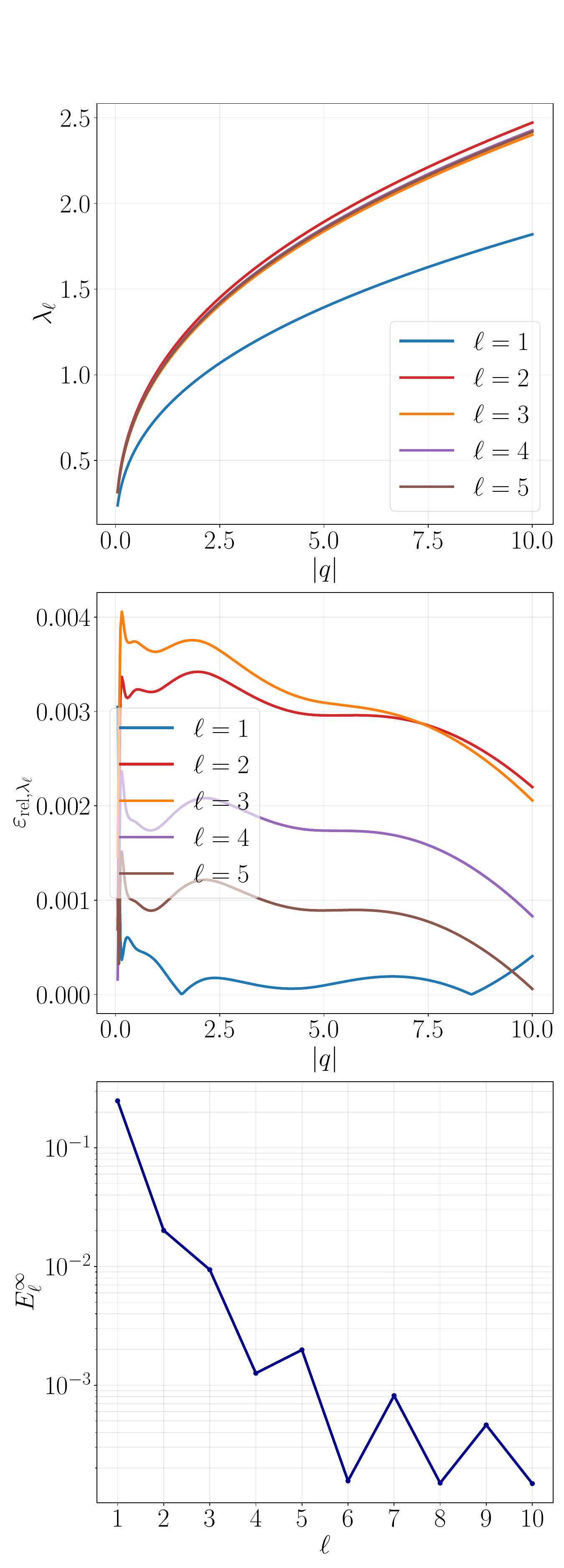}
        \label{fig:alpha0384beta13248optimiz}
    }
        \caption{Optimization results for the two VSS test cases. In each column,  The first row compares the reconstructed and exact eigenvalues \(\lambda_{\ell}(|q|)\) for selected orders \(\ell=1,\cdots,5\). The second row shows the relative error of reconstructed eigenvalues\(\lambda_{\ell}(|q|)\) for selected orders \(\ell=1,\cdots,5\). The third row shows the large-\(\ell\) consistency error \(E_{\ell}^\infty\) for \(\ell=1,\ldots,10\).}
		\label{fig:twooptimiz}
    \end{figure}

Figure~\ref{fig:twooptimiz} shows that Algorithm~\ref{algorithm:distributrans} accurately reconstructs the eigenvalues \(\lambda_{\ell}(|q|)\) for all two test cases. To check the convergence of $\lambda_{\ell}{(|q|)}$ to $K(|q|)$, we therefore define:
\begin{equation}
\label{eq:lconsisterr}
    E_l^\infty=\Bigg(\int_{0.05}^{10}\left|\lambda_l(|q|)-K(|q|)\right|\,\mathrm d|q|\Bigg)\bigg/\Bigg(\int_{0.05}^{10}\left|K(|q|)\right|\,\mathrm d|q|\Bigg).
\end{equation}
The decay of \(E_{\ell}^\infty\) further confirms that the reconstructed eigenvalues have the correct large-order behavior. Additional details on the noise setting and the reconstructed impact parameter \(b\) are reported in Appendix~\ref{app:optimizNF}.

Subsequently, we train the conditional NF sampler for the two test cases using the reconstructed eigenvalues obtained above. In the Green's function approximation, we truncate at \(L_{\max}=10\). The network is trained with AdamW and a cosine learning-rate schedule with linear warmup for \(5,000\) epochs. The initial distribution is $\mathcal{N}(0.5,2.25)$ and the conditions are sampled from $[0.05, 10], [0.001, 10]$ respectively. 

Figure~\ref{fig:twotrain} shows the sampling results after training. The NF samples agree well with the corresponding continuous part of Green's function $G^{\text{c}}_{L_{\max}}$. In particular, the discrepancy between the NF-generated samples and the analytical reference $G^{\text{c}}_N$ is small. This confirms that the trained conditional NF provides an accurate sampler for the collision Green's function. In the following Boltzmann simulation, we reuse the trained NF as the sampler for \(G^{\text{c}}_{L_{\max}}\).

\begin{figure}[!t]
        \centering
        \subfigure[Relative Entropy]{
        \centering
        \includegraphics[height=55mm,width=110mm]{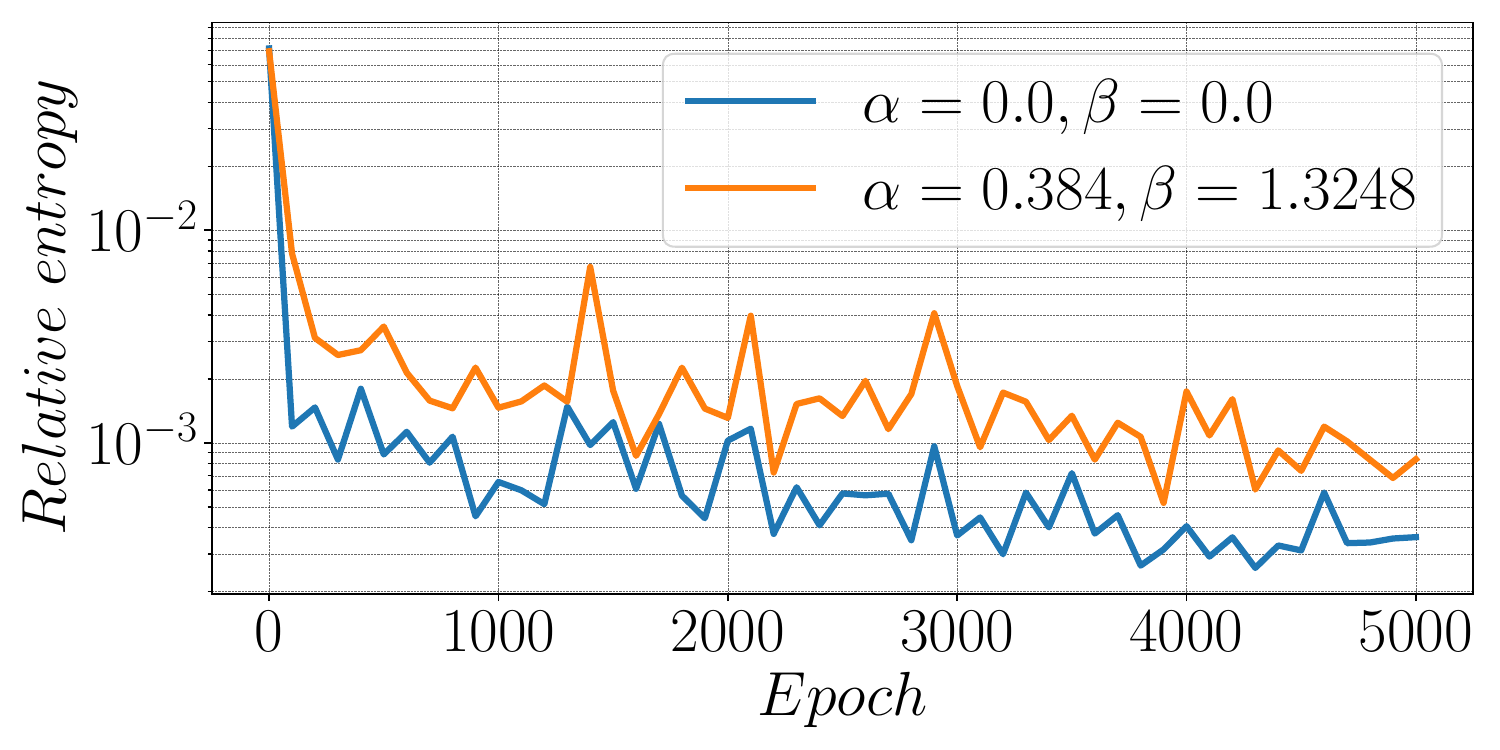}
        \label{fig:relatentromax}
    }
        \centering
        \\
        
        \subfigure[$\alpha=0,\beta=0$]{
        \centering
        \includegraphics[height=55mm,width=70mm]{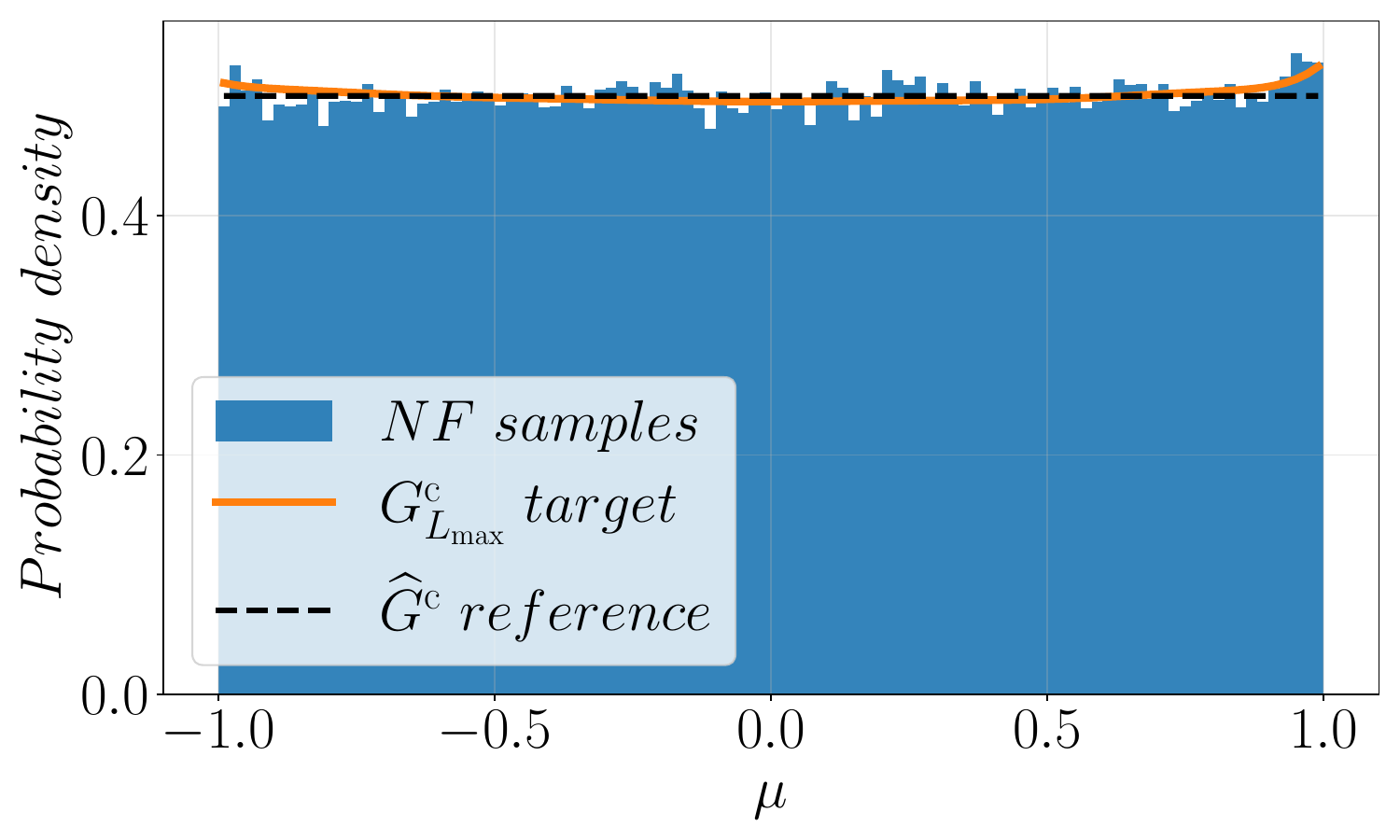}
        \label{fig:alpha0beta0forec}
    }
        \subfigure[$\alpha=0.384,\beta=1.3248$]{
        \centering
        \includegraphics[height=55mm,width=70mm]{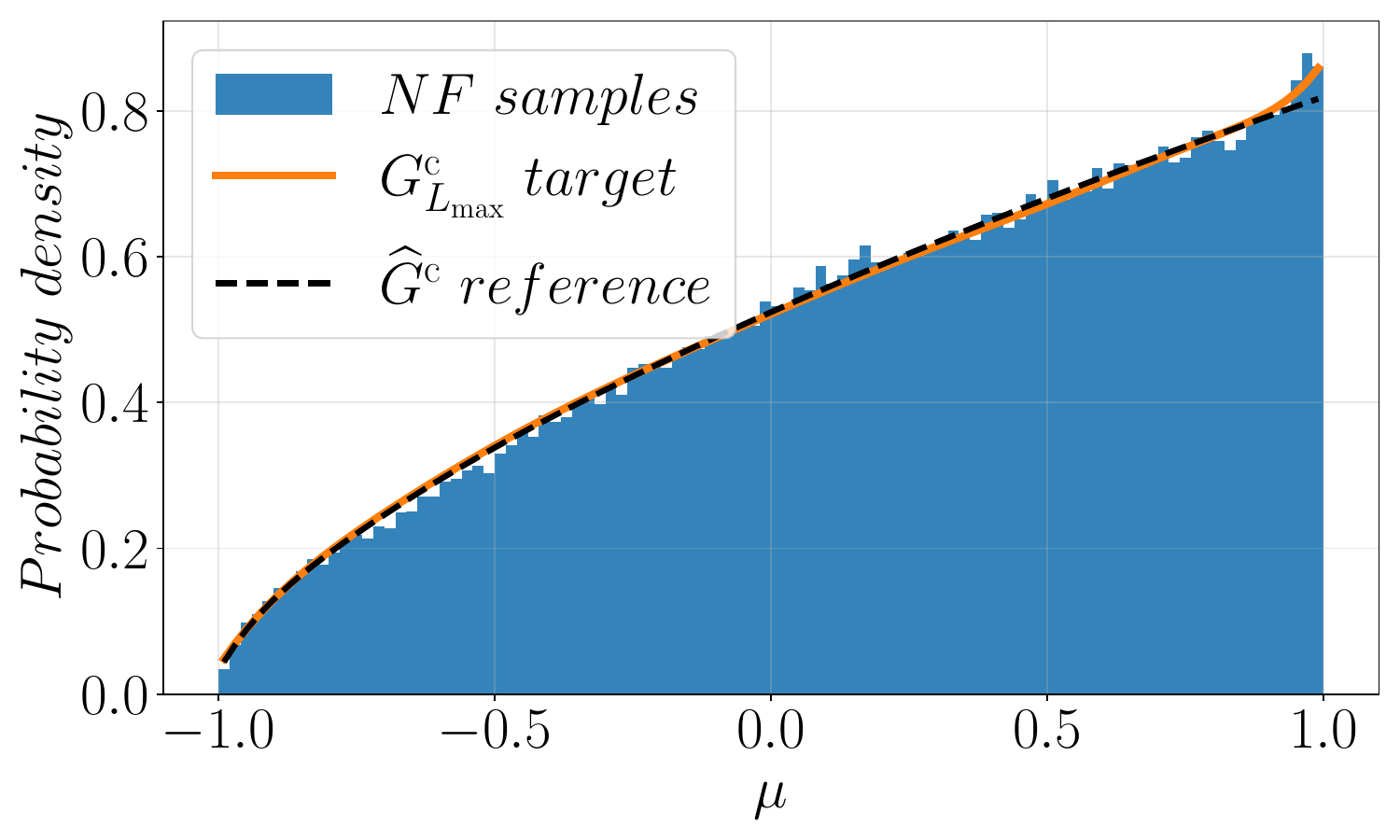}
        \label{fig:alpha0384beta13248forec}
    }
		\centering
		\caption{The first row shows the maximum of relative entropy in $q\in\{0.05,1,10\},\Delta t\in\{0.001,0.1,10\}$ with $250,000$ samples with $G^{\text{c}}_N$. The second row compares the histogram of $q=1,\Delta t=0.1$ with $250,000$ samples with $G^{\text{c}}_N$ and $\widehat{G}^{\text{c}}$ ($\widehat{G}^{\text{c}}$ refers to the continuous part of Green's function by theory) in two cases.}
		\label{fig:twotrain}
    \end{figure}

\subsection{3D BKW solution for Maxwell molecules}\label{subsec:3DBKW}

In this case, the collision kernel is $$B(|q|,\theta)=\frac{1}{4\pi}$$ and the BKW solution is given by \cite{krook1977exact}
$$ f(t,v)=\dfrac{1}{(2\pi K)^{1.5}}\Big(2.5-\dfrac{1.5}{K}+\dfrac{1-K}{2K^{2}}|v|^{2}\Big)\exp\Big(-\dfrac{|v|^{2}}{2K}\Big), \quad K=1-\exp\Big(-\frac{t-6\log(0.4)}{6}\Big) .$$ 

We set \(t_0 = 0\), \(t_\text{end} = 10\), and \(\Delta t = 0.1\). The initial particle velocities are sampled independently from the initial distribution. NF we used is the trained model with $(\alpha,\beta)=(0,0)$ in subsection~\ref{subsec:optimiznf}. We use Algorithm~\ref{Boltzmannsphere} to solve this equation with \(N = 500,000\) and \(N = 5,000,000\) (the same trained NF model is used for both cases). 
The results are presented in Figure~\ref{fig:CompardifferN3dMax} and~\ref{fig:3dMaxcross}. It confirms that our method can obtain high accuracy and conserves energy strictly.

 \begin{figure}[!t]
        \centering
        \subfigure[Relative $L_2$ error]{
        \centering
        \includegraphics[height=60mm,width=75mm]{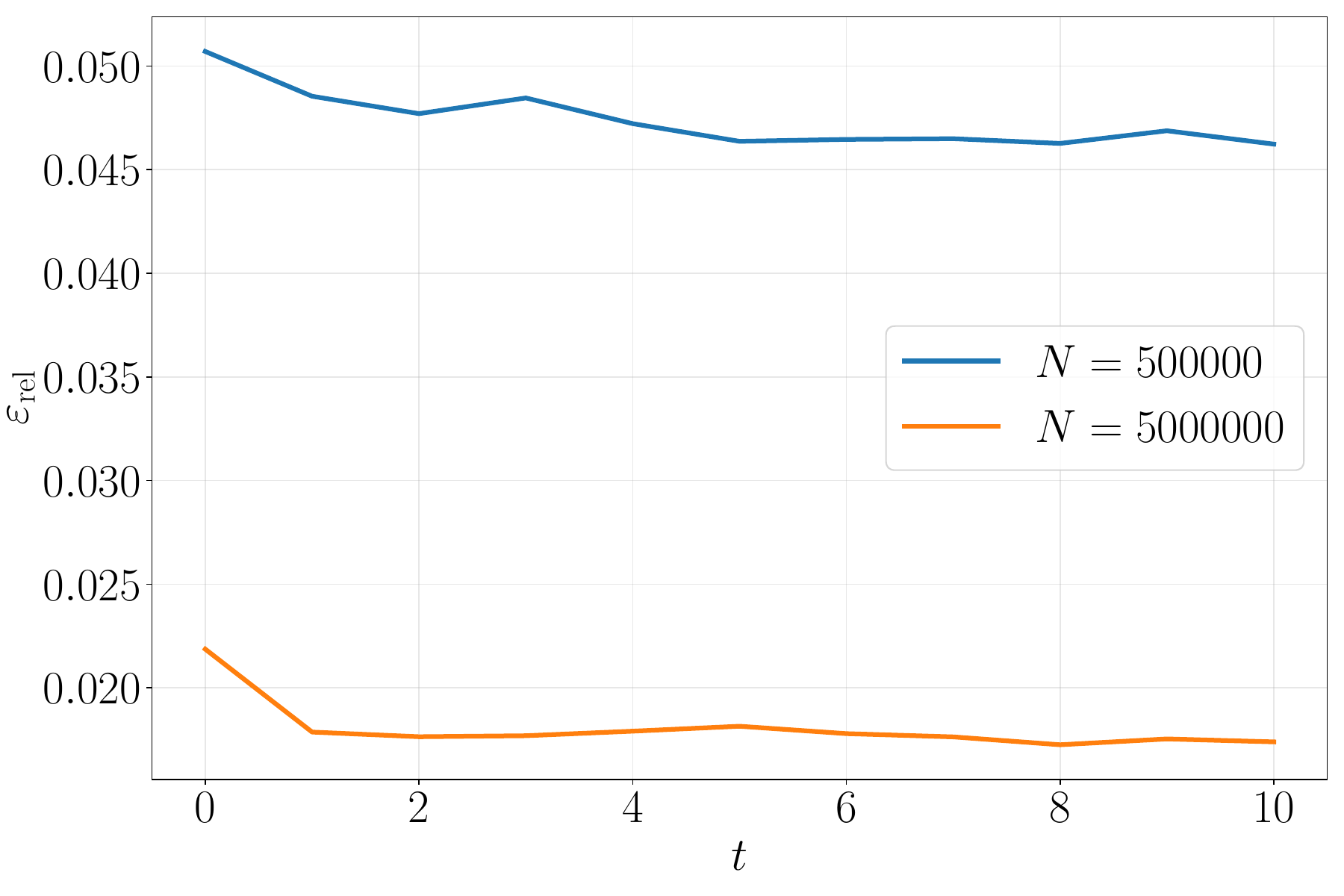}
        \label{fig:3dMaxerror}
    }
    \centering
        \subfigure[Energy]{
        \centering
        \includegraphics[height=60mm,width=75mm]{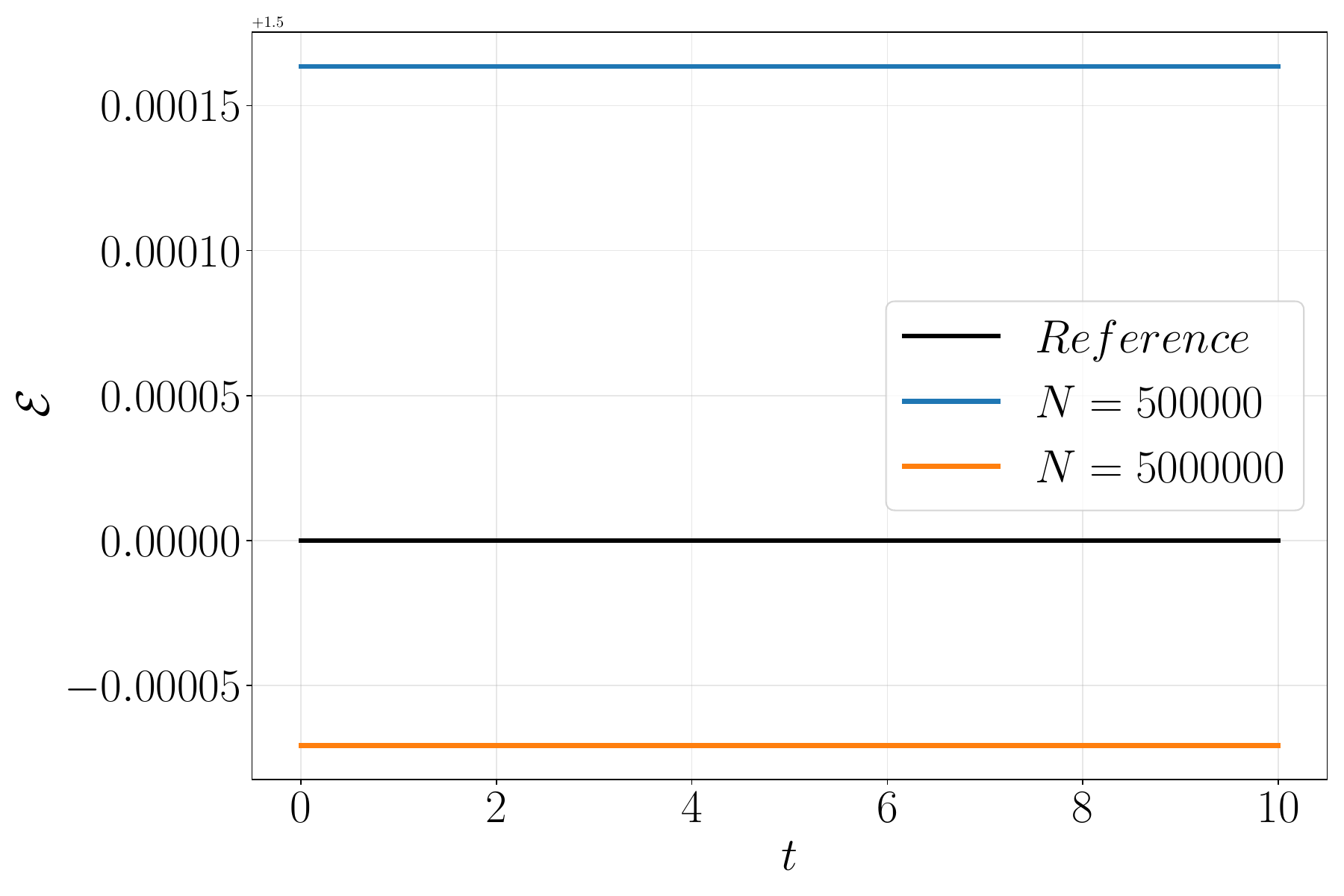}
        \label{fig:3dMaxenergy}
    }
	
		\centering
		\caption{Time evolution of relative $L_2$ error, energy for different $N$.}
		\label{fig:CompardifferN3dMax}
    \end{figure}

    \begin{figure}
        \centering
        \subfigure[$v_x$ cross section at $t=1$]{
        \centering
        \includegraphics[height=60mm,width=75mm]{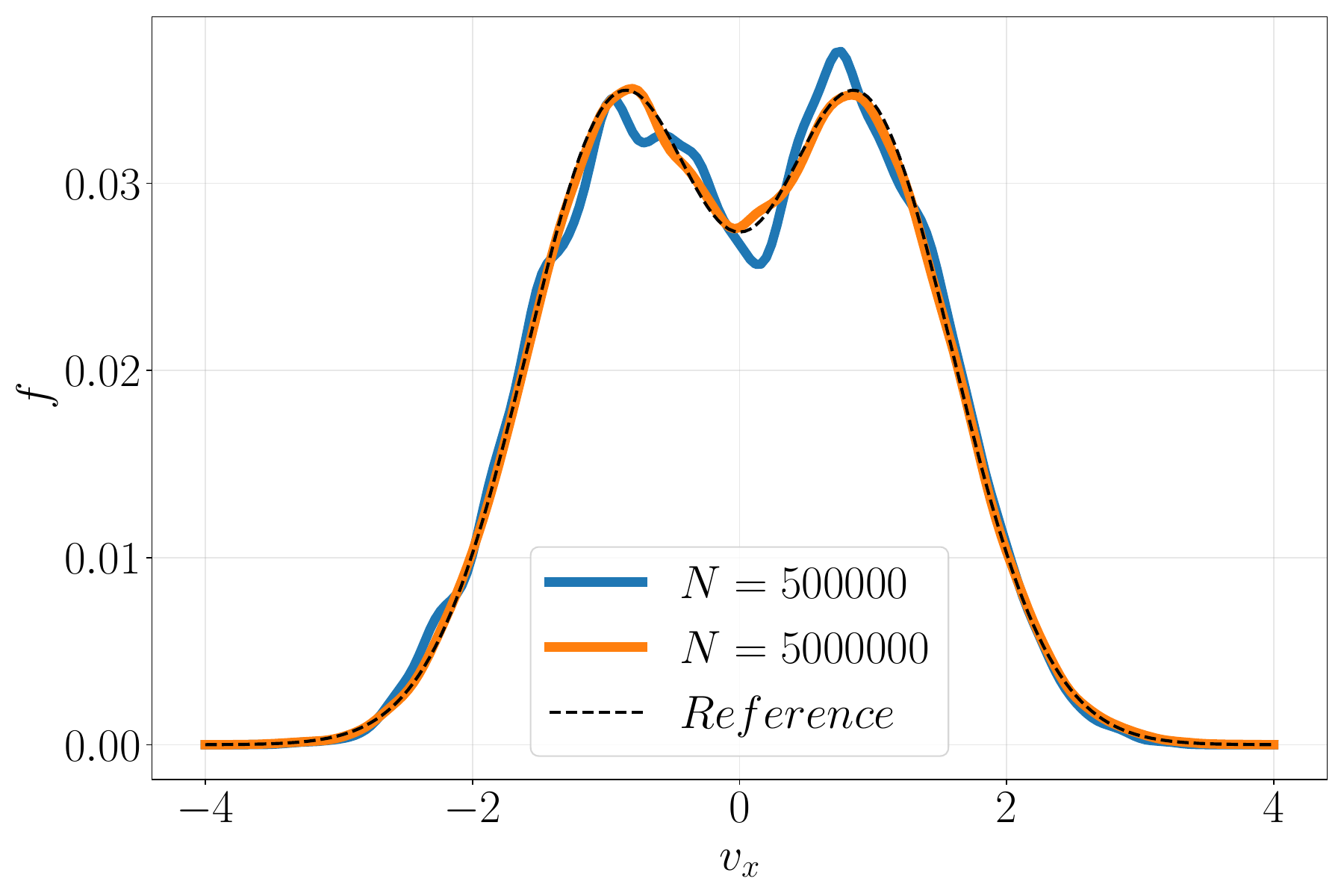}
        \label{fig:3dMaxvxcrosst1}
    }
    \centering
        \subfigure[$v_x$ cross section at $t=10$]{
        \centering
        \includegraphics[height=60mm,width=75mm]{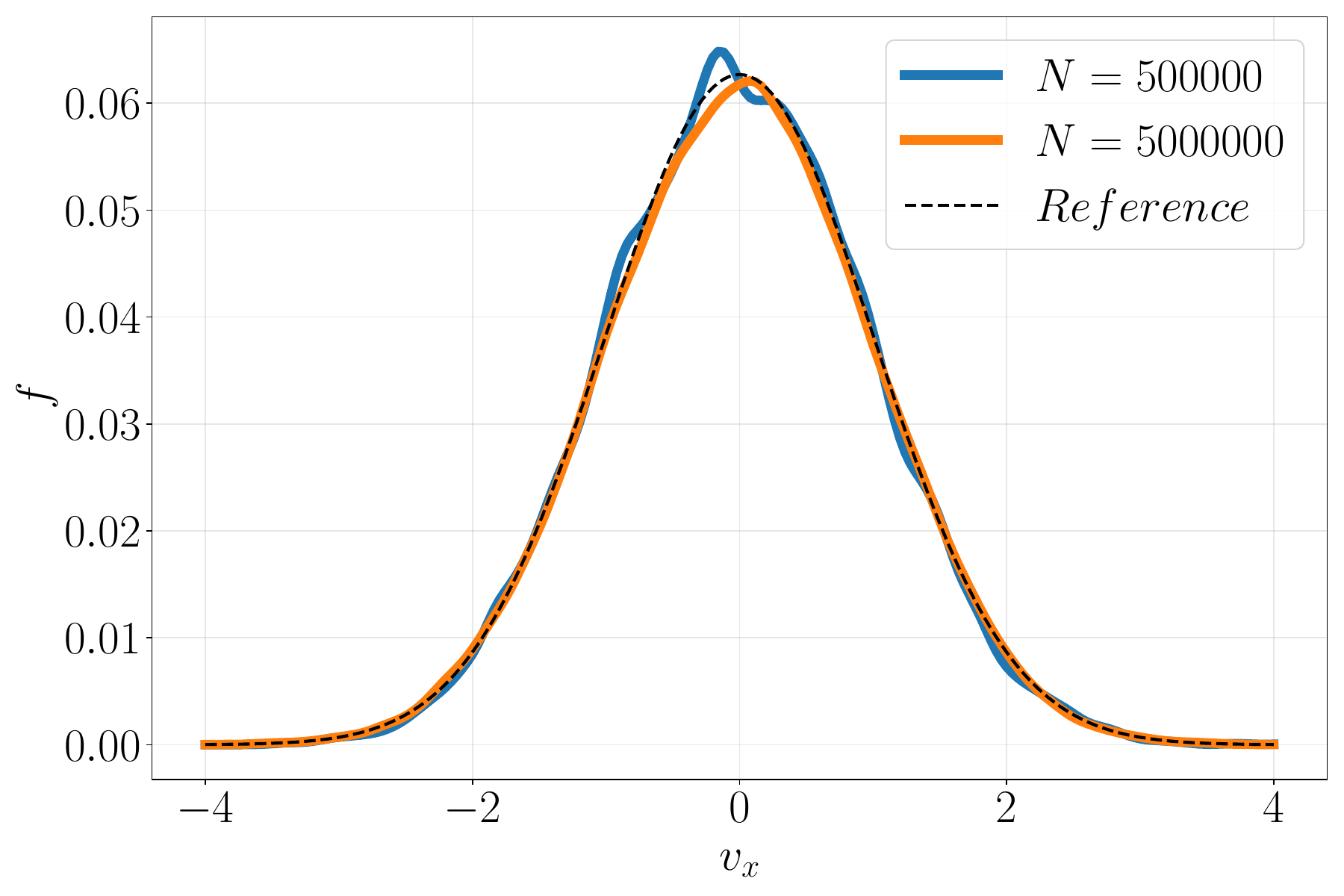}
        \label{fig:3dMaxvxcrosst10}
    }
        \caption{Cross section at different $t$}
        \label{fig:3dMaxcross}
    \end{figure}

\subsection{3D Maxwell molecules with non smooth initial distribution}\label{subsec:3dMaxnosmooiniti}

Here we adopt the problem setting of Test 3 in \cite{WU201327}.

In this case, the collision kernel is still $$B(|q|,\theta)=\frac{1}{4\pi}$$ and the initial condition is given by $$ f(v, t=0) = \frac{1}{3(2\pi)^{3/2}}\begin{cases}
4 \exp \left( -\frac{|v|^2}{2} \right), & v_1 \geqslant 0, \\
\exp \left( -\frac{v_1^2}{8} - \frac{v_2^2+v_3^2}{2} \right), & v_1 < 0.
\end{cases} ,$$ which is not continuous at $v_1=0$.

We set \(t_0 = 0\), \(t_\text{end} = 10\), and \(\Delta t = 0.1\). The initial particle velocities are sampled independently from the initial distribution. NF we used is still the trained model with $(\alpha,\beta)=(0,0)$ in subsection~\ref{subsec:optimiznf}. We use Algorithm~\ref{Boltzmannsphere} to solve the equation with \(N = 500,000\) and \(N = 5,000,000\). We use $N=10,000,000$ and $\Delta t=0.01$ as the reference solution. And It can be obtained analytically \cite{WU201327} that the evolution of the second- and fourth-order moments is given by

\[
P_{xx} = \int f |v_1|^2 \mathrm{d}v =\frac{2}{3} \exp \left( -\frac{t}{2} \right) + \frac{4}{3}, \quad 
M'_4 = \int f |v|^4 \mathrm{d}v = \frac{22}{3} \exp \left( -\frac{t}{3} \right) + \frac{80}{3}.
\] 
The results are presented in Figure~\ref{fig:CompardifferN3dMaxnoC0} and~\ref{fig:3dMaxnoC0cross}. By this experiment it confirms that our method can capture the evolution including the high moment with high accuracy and conserves energy at complex non-continuous initial distribution.

\begin{figure}[!t]
        \centering
        \subfigure[Relative $L_2$ error]{
        \centering
        \includegraphics[height=60mm,width=75mm]{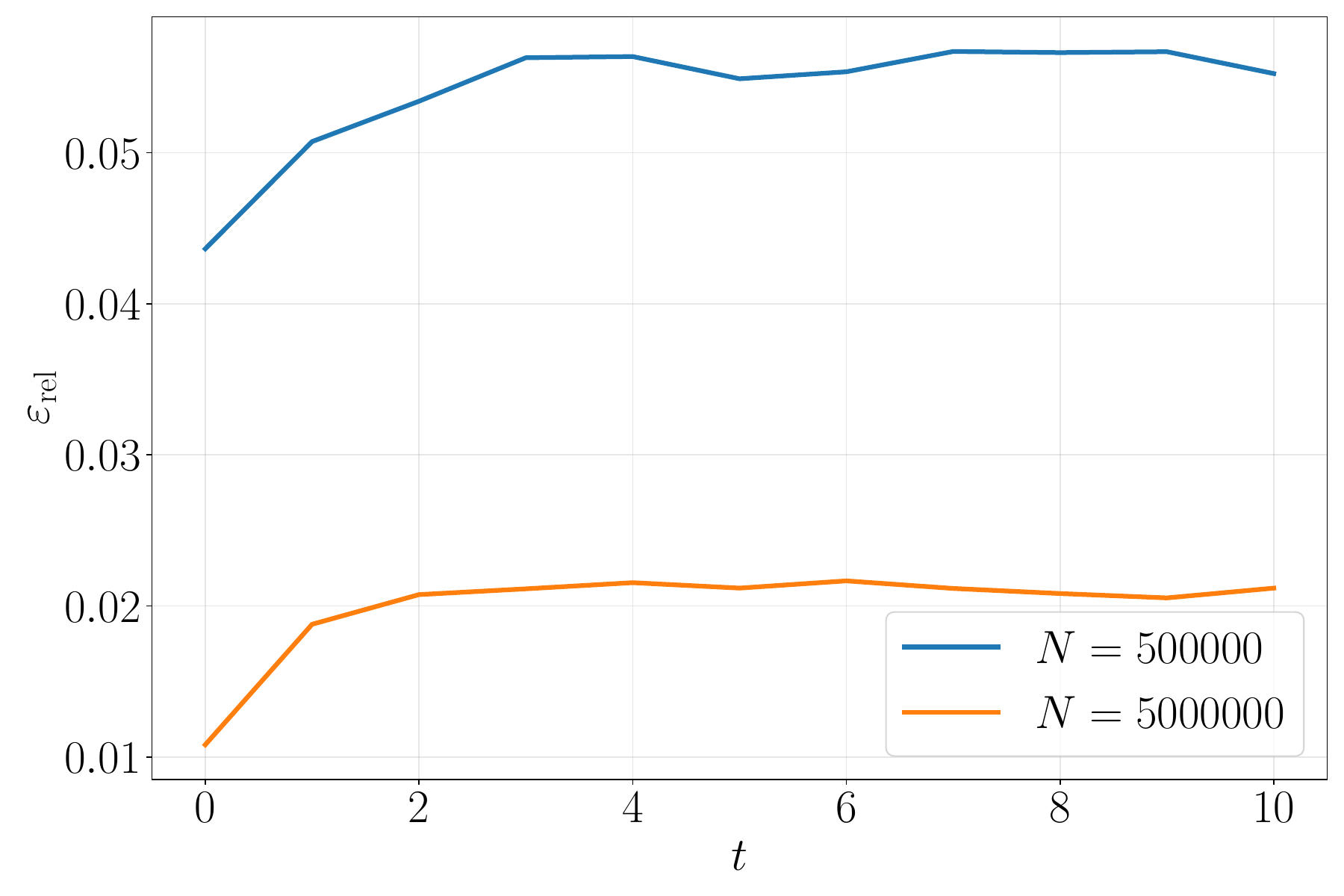}
        \label{fig:3dMaxnoC0error}
    }
    \centering
        \subfigure[Energy]{
        \centering
        \includegraphics[height=60mm,width=75mm]{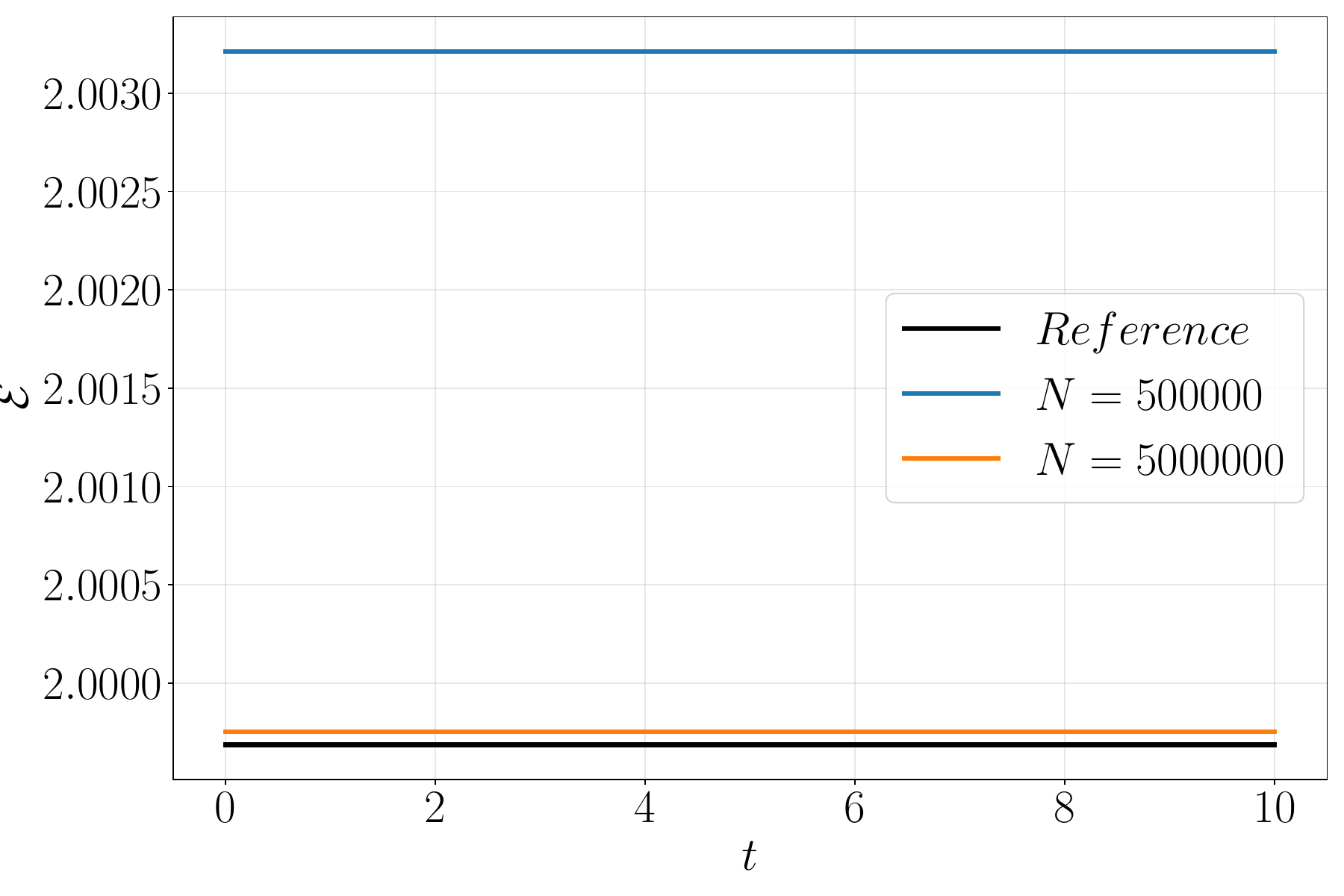}
        \label{fig:3dMaxnoC0energy}
    }
	\\

            \centering
        \subfigure[$P_{xx}$ relative error]{
        \centering
        \includegraphics[height=60mm,width=75mm]{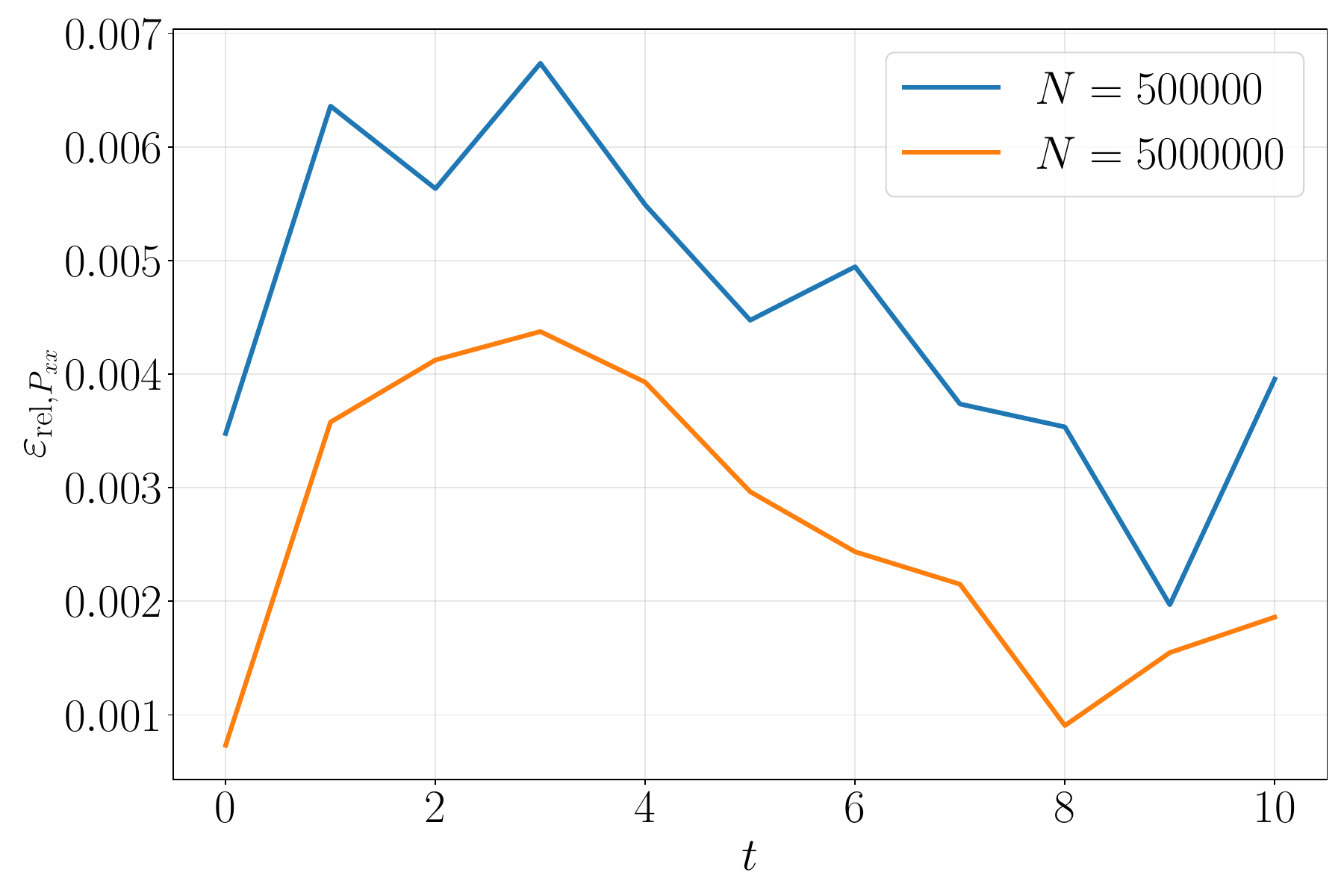}
        \label{fig:3dMaxnoC0Pxx}
    }
    \centering
        \subfigure[$M^{'}_{4}$ relative error]{
        \centering
        \includegraphics[height=60mm,width=75mm]{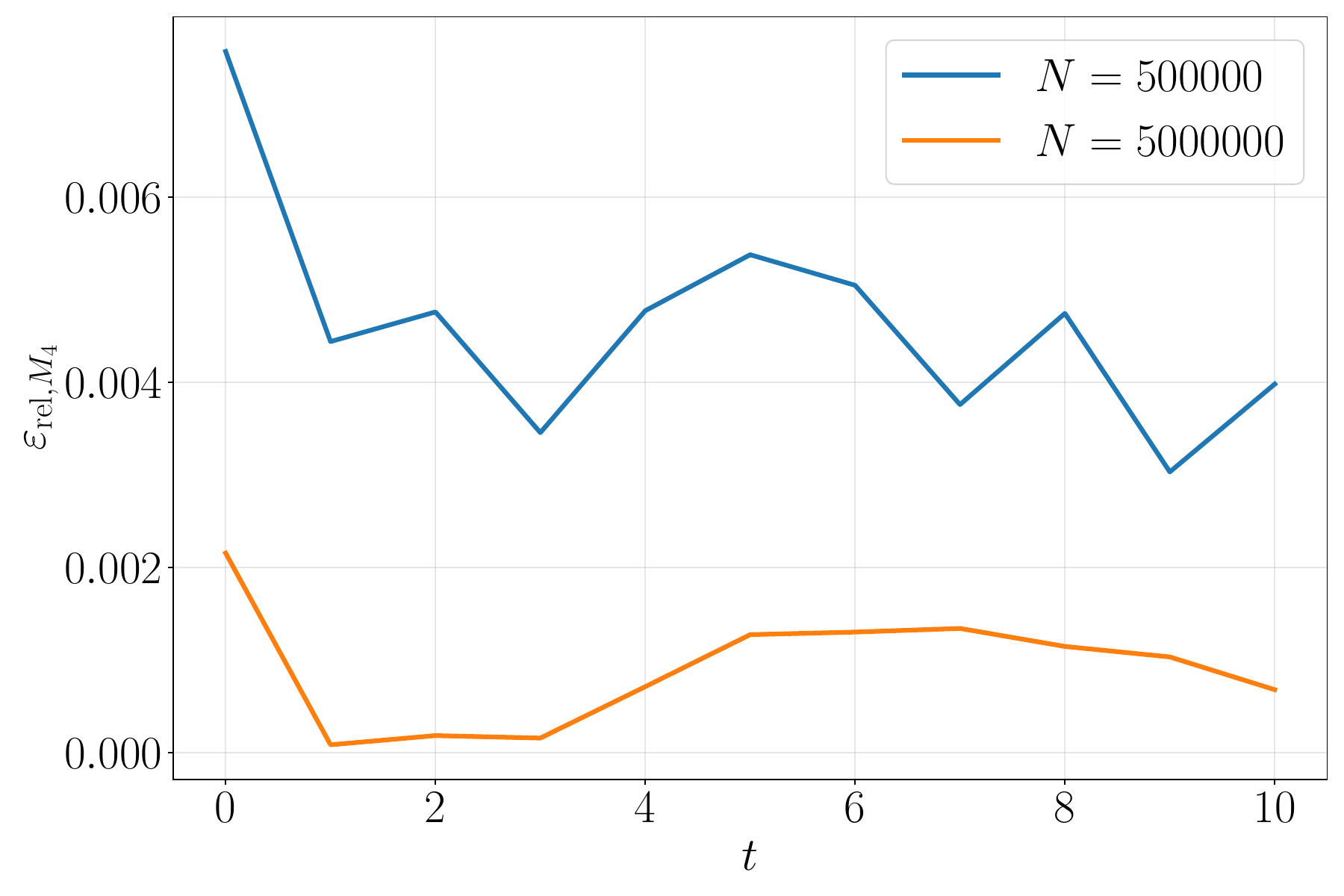}
        \label{fig:3dMaxnoC0M4}
    }
	\centering
	
		\centering
		\caption{Time evolution of relative $L_2$ error, energy, $P_{xx}$ relative error and $M^{'}_{4}$ relative error for different $N$.}
		\label{fig:CompardifferN3dMaxnoC0}
    \end{figure}

    \begin{figure}
        \centering
        \subfigure[$v_x$ cross section at $t=1$]{
        \centering
        \includegraphics[height=60mm,width=75mm]{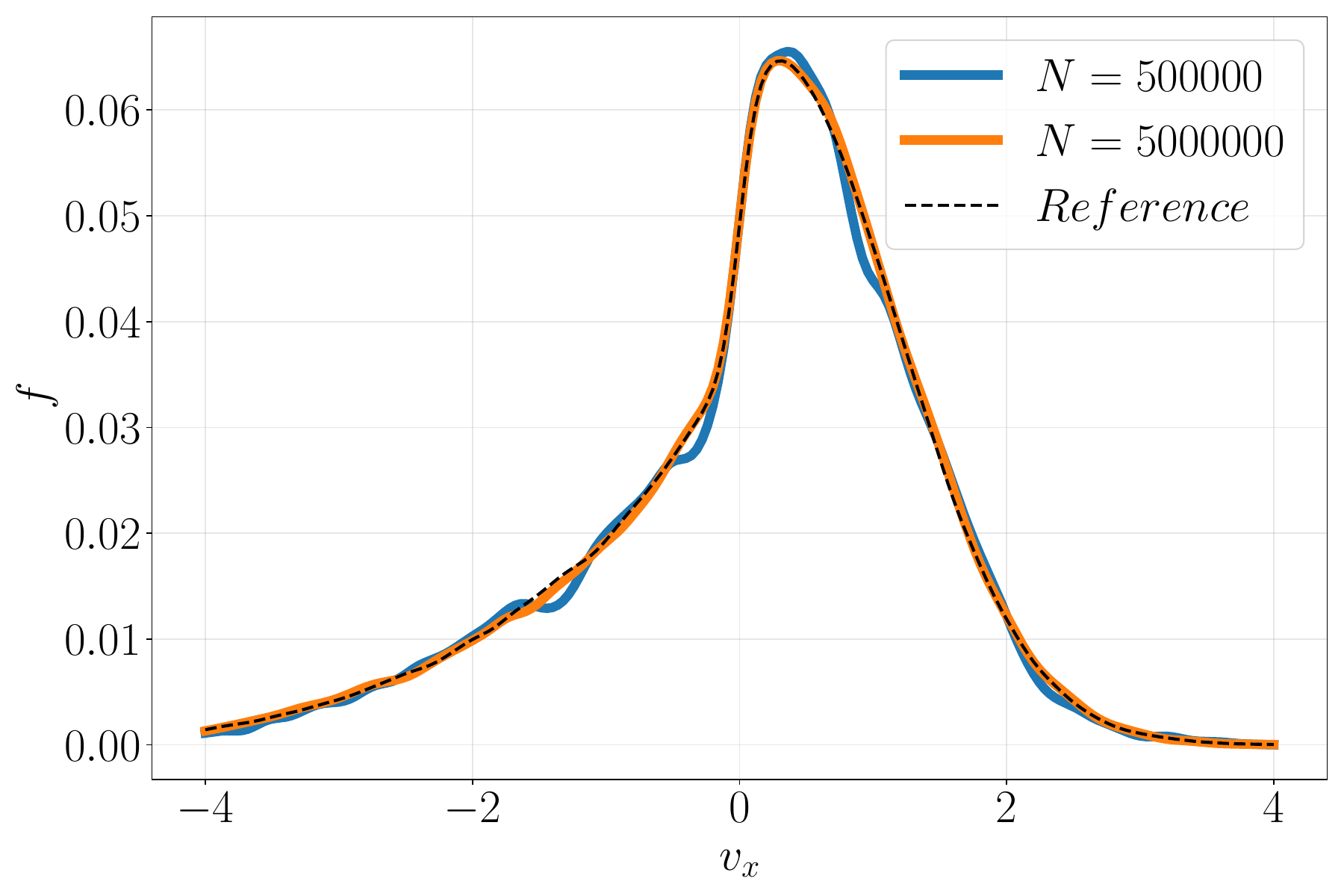}
        \label{fig:3dMaxnoC0vxcrosst1}
    }
    \centering
        \subfigure[$v_x$ cross section at $t=10$]{
        \centering
        \includegraphics[height=60mm,width=75mm]{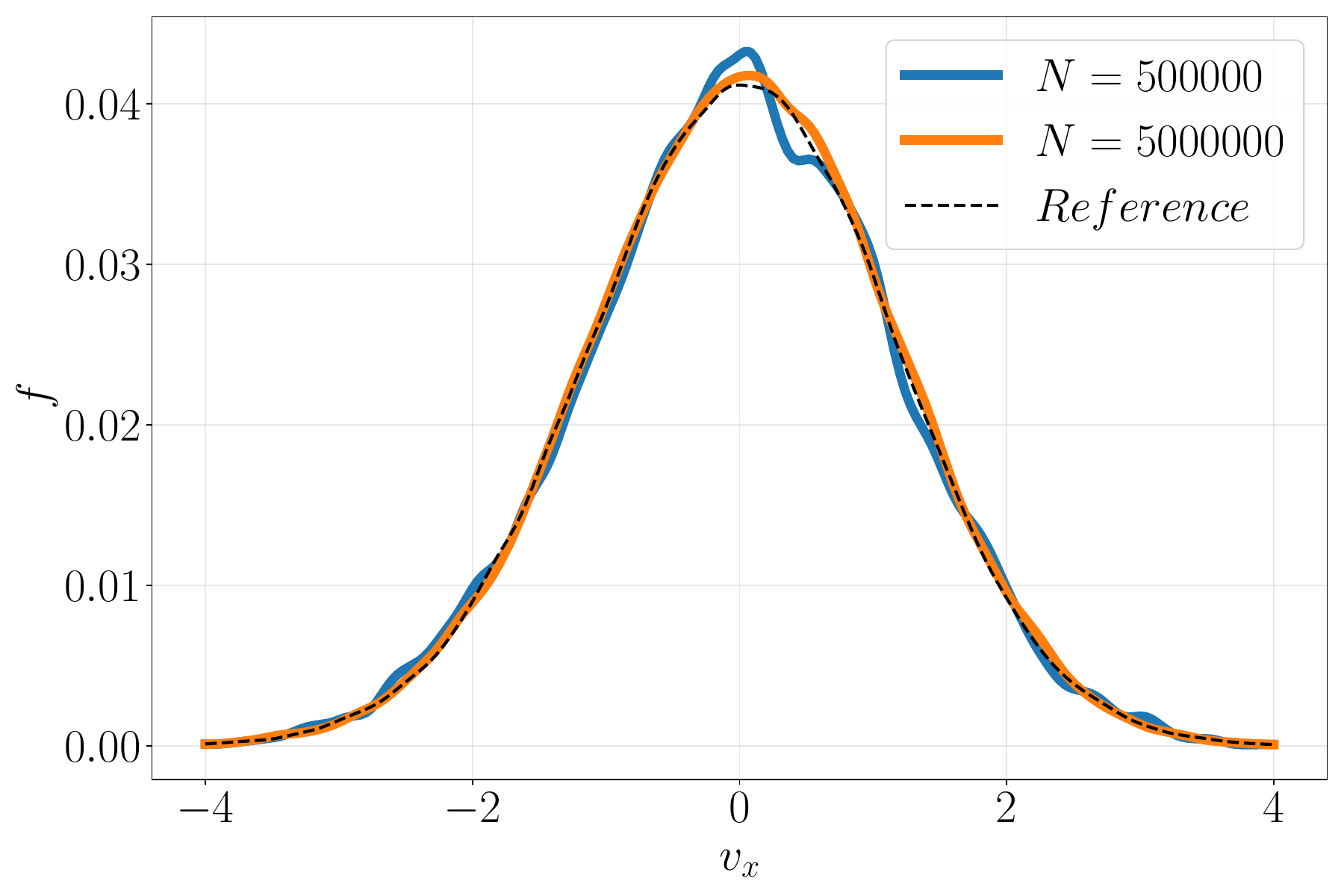}
        \label{fig:3dMaxnoC0vxcrosst10}
    }
        \caption{Cross section at different $t$}
        \label{fig:3dMaxnoC0cross}
    \end{figure}

\subsection{3D anisotropic solution for Ar}\label{subsec:3dAr}

In this example, we demonstrate that our method can effectively solve Boltzmann equations with complex kernel, such as that for argon.

In this case, the collision kernel turns to $$B(|q|,\theta)=\frac{1
}{4\pi}\frac{\beta+2}{2}|q|^{\alpha}\cos^{\beta}(\frac{\theta}{2})$$ with $\alpha=0.384,\beta=1.3248$, and the initial condition is given by
$$ f(0,v)=\frac{1}{2(2\pi)^{1.5}}\Big[2^{1.5}\exp\Big(-(v-u_{1})^{2}\Big)+\exp\Big(-\frac{1}{2}(v-u_{2})^{2}\Big)\Big)\Big],\quad$$ $$  u_{1}=(2, 0, 0), u_{2}=(-1, 0, 0).$$

We set \(t_0 = 0\), \(t_\text{end} = 10\), and \(\Delta t = 0.1\). The initial particle velocities are sampled independently from the initial distribution. NF we used is the trained model with $(\alpha,\beta)=(0.384,1.3248)$ in subsection~\ref{subsec:optimiznf}. We use Algorithm~\ref{Boltzmannsphere} to solve the equation with \(N = 500,000\) and \(N = 5,000,000\). We use $N=10,000,000$ and $\Delta t=0.01$ as the reference solution .

The results are presented in Figure~\ref{fig:CompardifferN3dAr} and~\ref{fig:3dArcross}. Through this experiment it confirms that our method can capture the evolution of particle collision with high accuracy and conserves energy with complex form of kernel.

\begin{figure}[!t]
        \centering
        \subfigure[Relative $L_2$ error]{
        \centering
        \includegraphics[height=60mm,width=75mm]{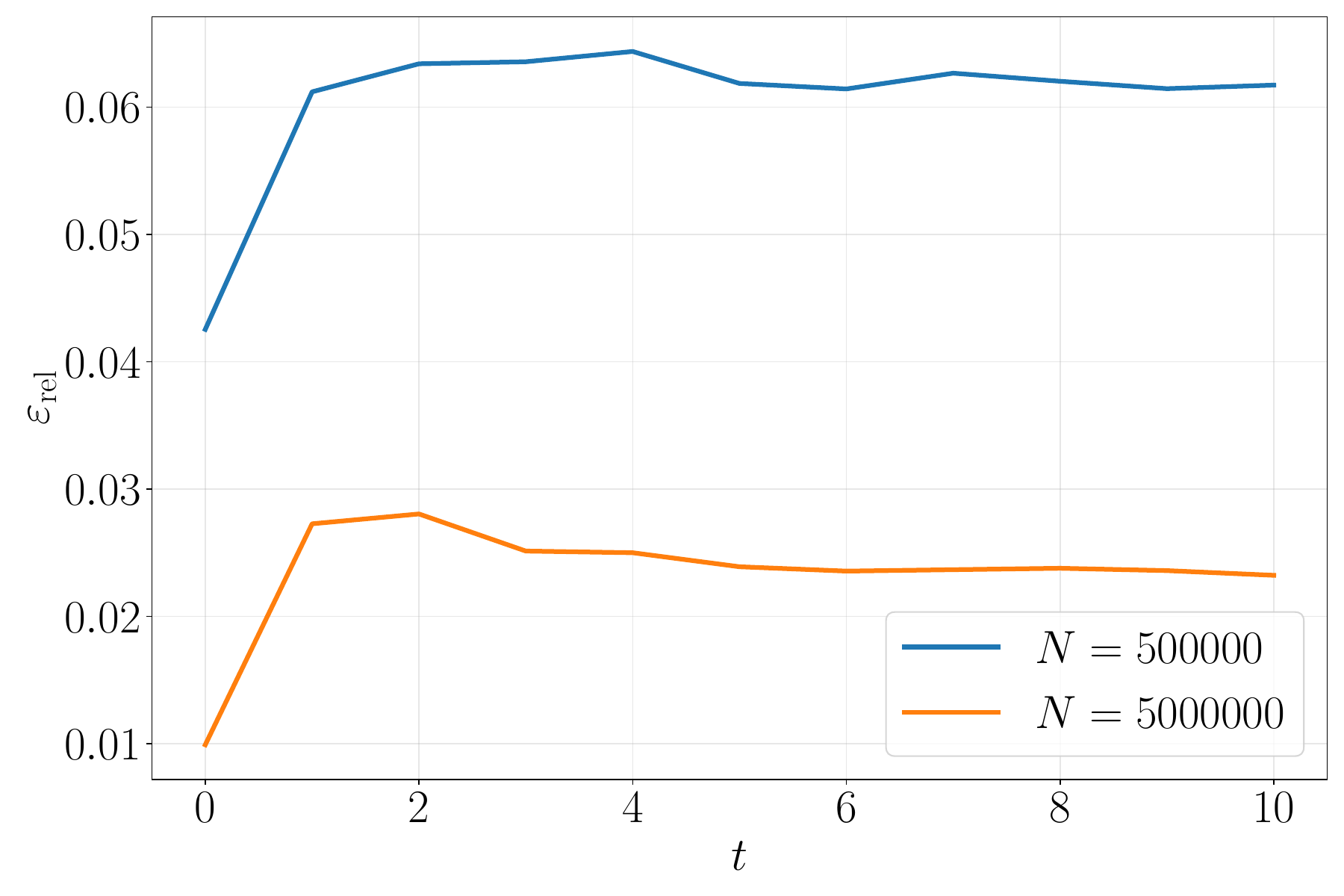}
        \label{fig:3dArerror}
    }
    \centering
        \subfigure[Energy]{
        \centering
        \includegraphics[height=60mm,width=75mm]{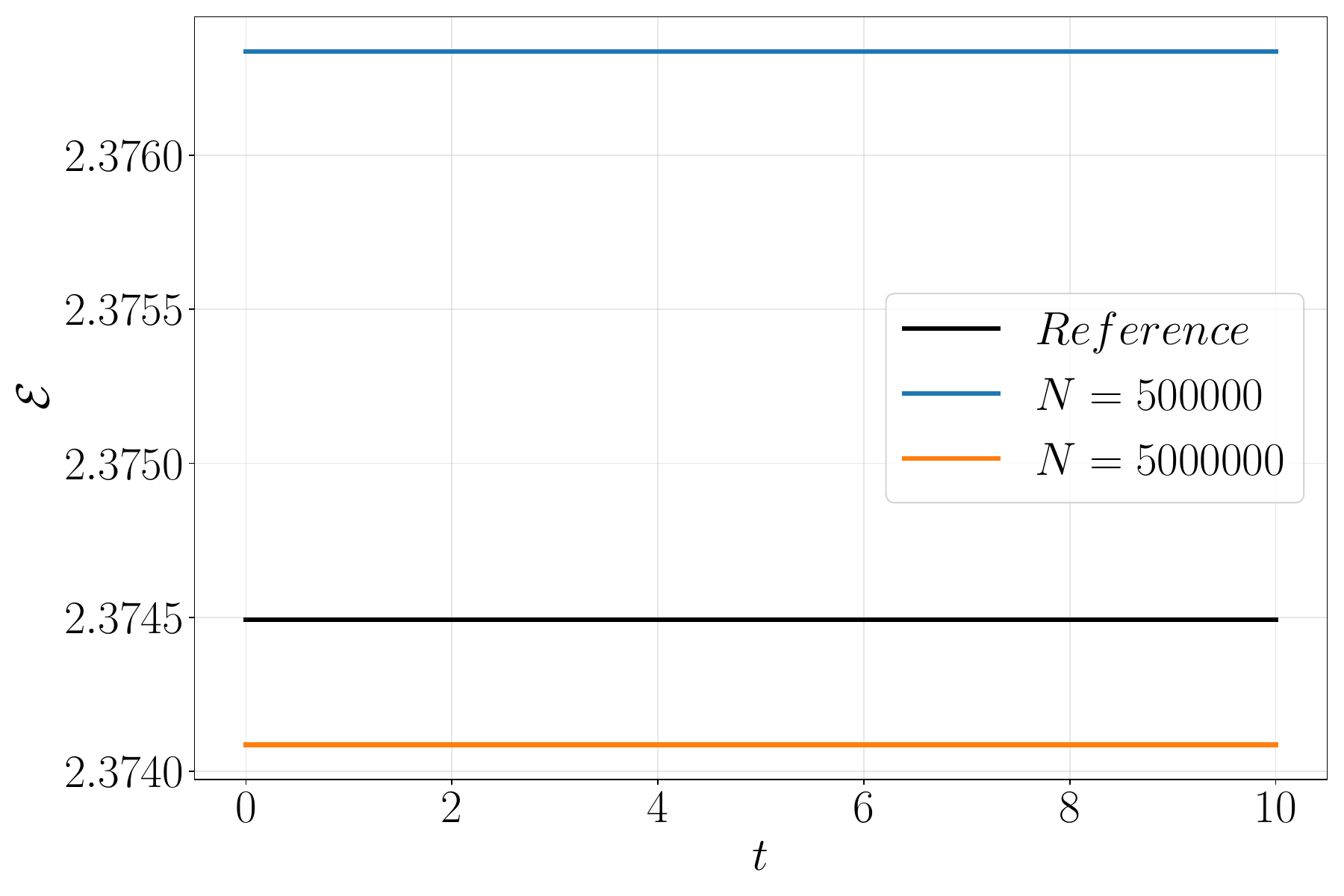}
        \label{fig:3dArenergy}
    }
		\centering
		\caption{Time evolution of relative $L_2$ error, energy for different $N$.}
		\label{fig:CompardifferN3dAr}
    \end{figure}

    \begin{figure}
        \centering
        \subfigure[$v_x$ cross section at $t=1$]{
        \centering
        \includegraphics[height=60mm,width=75mm]{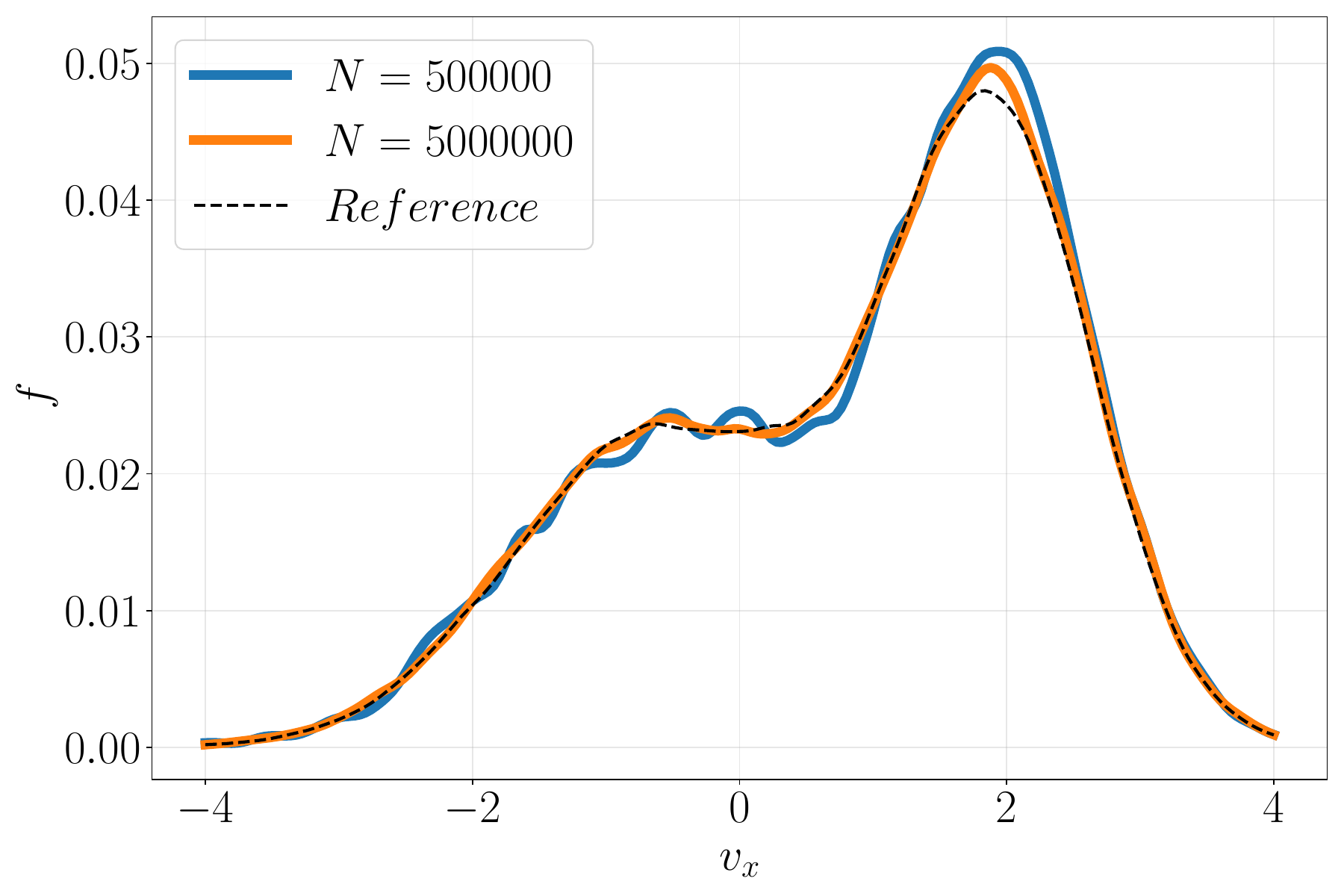}
        \label{fig:3dArvxcrosst1}
    }
    \centering
        \subfigure[$v_x$ cross section at $t=10$]{
        \centering
        \includegraphics[height=60mm,width=75mm]{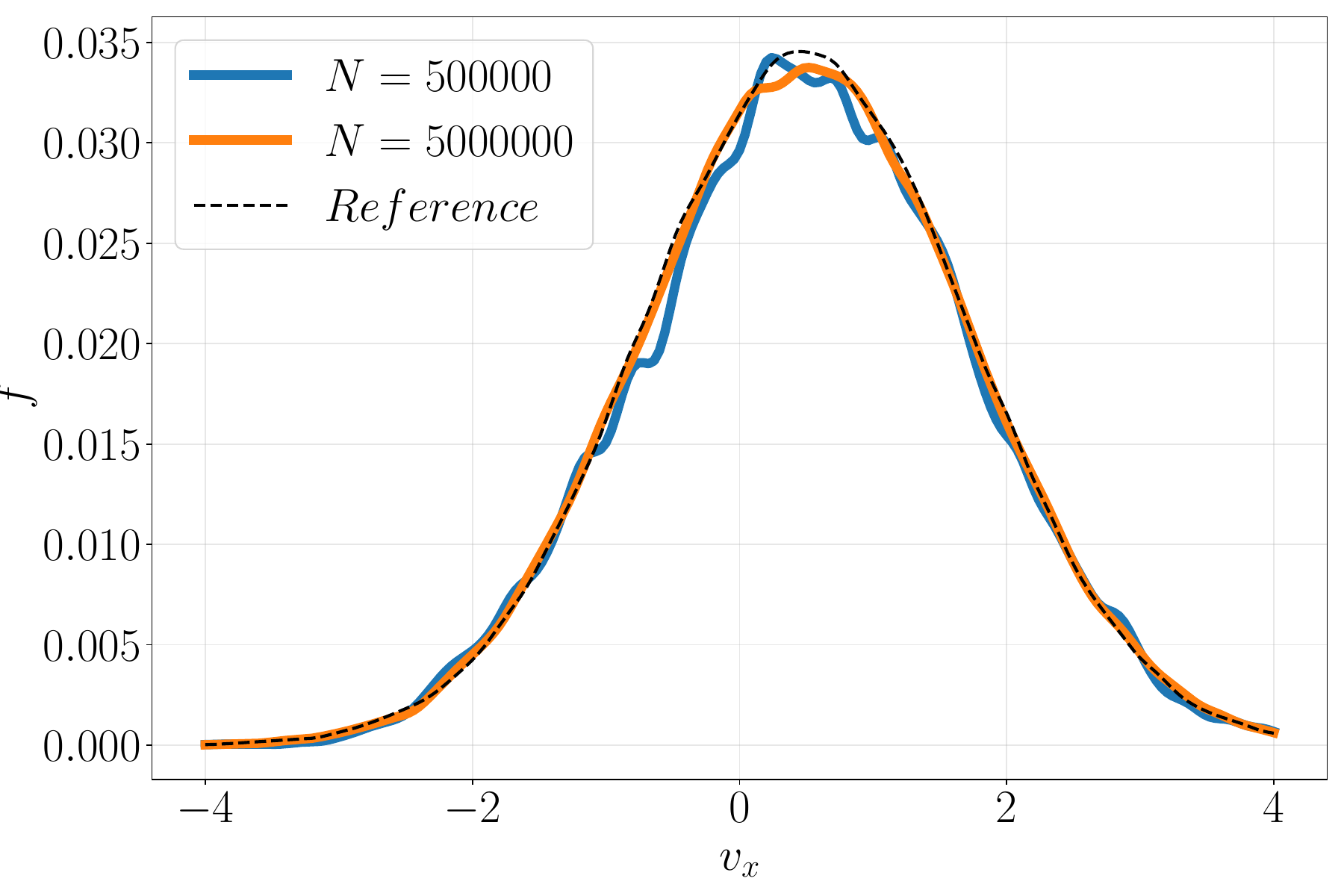}
        \label{fig:3dArvxcrosst=100}
    }
        \caption{Cross section at different $t$}
        \label{fig:3dArcross}
    \end{figure}

\subsection{A Riemann problem}\label{subsec:Riemann}

Eventually we demonstrate the effectiveness of our method when extended to spatially nonhomogeneous cases. 

For simplicity, we assume that particles are uniformly distributed in the second and third spatial dimension, leading to homogeneity in these directions. Hence, we only need to consider the first spatial dimension. We take the first spatial domain as $[0,1]$ with reflective boundary condition.

In this scenario, the collision kernel is $$B(|q|,\theta)=\frac{1}{4\pi},$$and the initial condition is a local Maxwellian distribution computed from the following macroscopic quantities
\begin{equation*}
\begin{cases}
(\rho_l, u_l, T_l) = (1, 0, 1), & \text{if } 0 \leqslant x \leqslant 0.5, \\
(\rho_r, u_r, T_r) = (0.125, 0, 0.25), & \text{if } 0.5 < x \leqslant 1.
\end{cases} 
\end{equation*}

We set \(t_0 = 0\), \(t_\text{end} = 0.25\), and \(\Delta t = 0.005\). The initial particle velocities and positions are sampled independently from the initial distribution. NF we used is still the trained model with $(\alpha,\beta)=(0,0)$ in subsection~\ref{subsec:optimiznf}. We use Algorithm~\ref{Boltzmannsphere} and PIC algorithm to solve the equation with \(N = 500,000\) and \(N = 5,000,000\), where we divide $n_x=100$ cells in $[0,1]$. We use $N=10,000,000$ and $\Delta t=0.001$ as the reference solution.

The results are presented in Figure~\ref{fig:CompardifferN1D3V} and~\ref{fig:1D3Vmeasure}, confirming that our method can capture the evolution with high accuracy and conserves energy in spatially nonhomogeneous cases as well.

\begin{figure}[!t]
        \centering
        \subfigure[Relative $L_2$ error]{
        \centering
        \includegraphics[height=60mm,width=75mm]{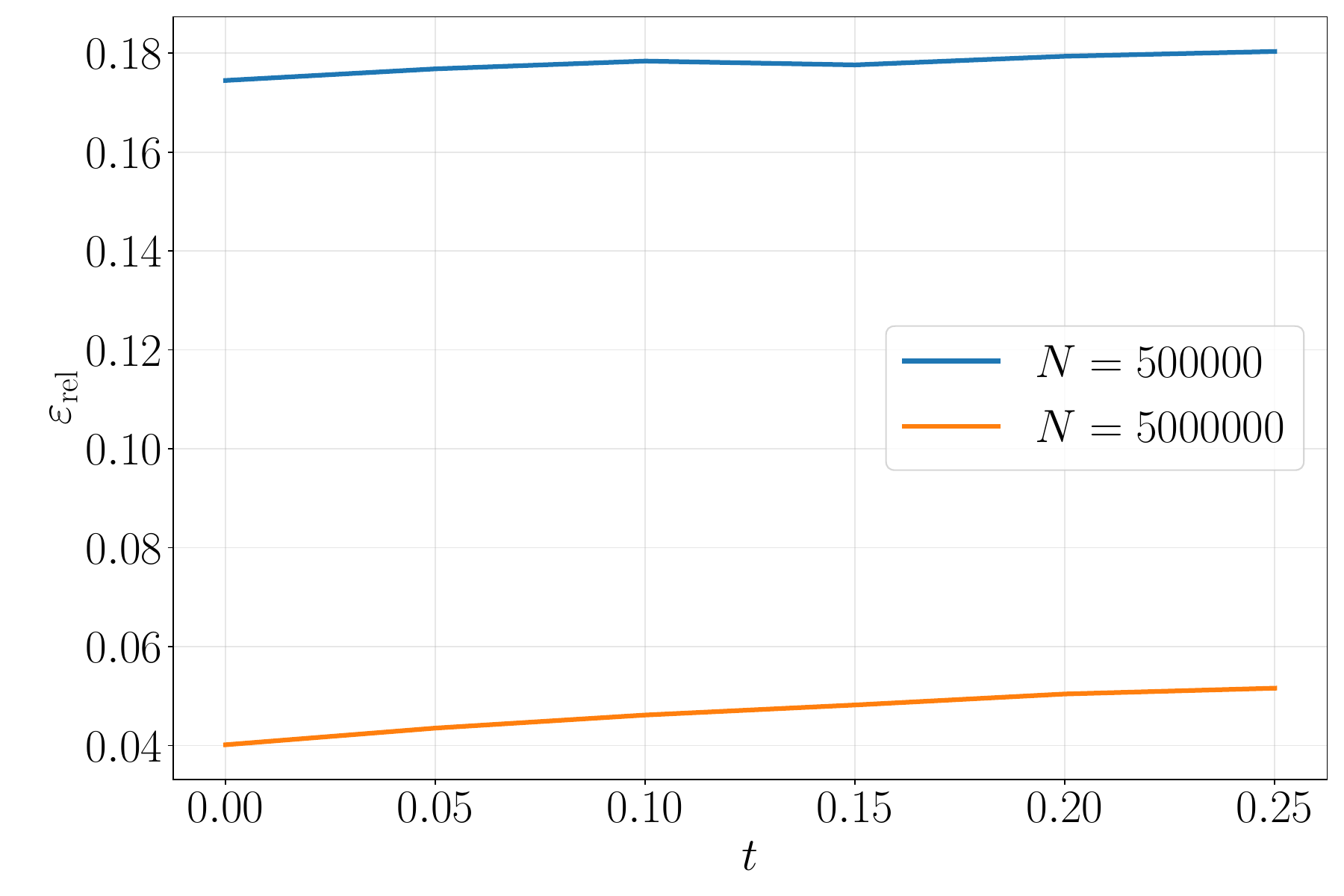}
        \label{fig:1D3Verror}
    }
    \centering
        \subfigure[Energy]{
        \centering
        \includegraphics[height=60mm,width=75mm]{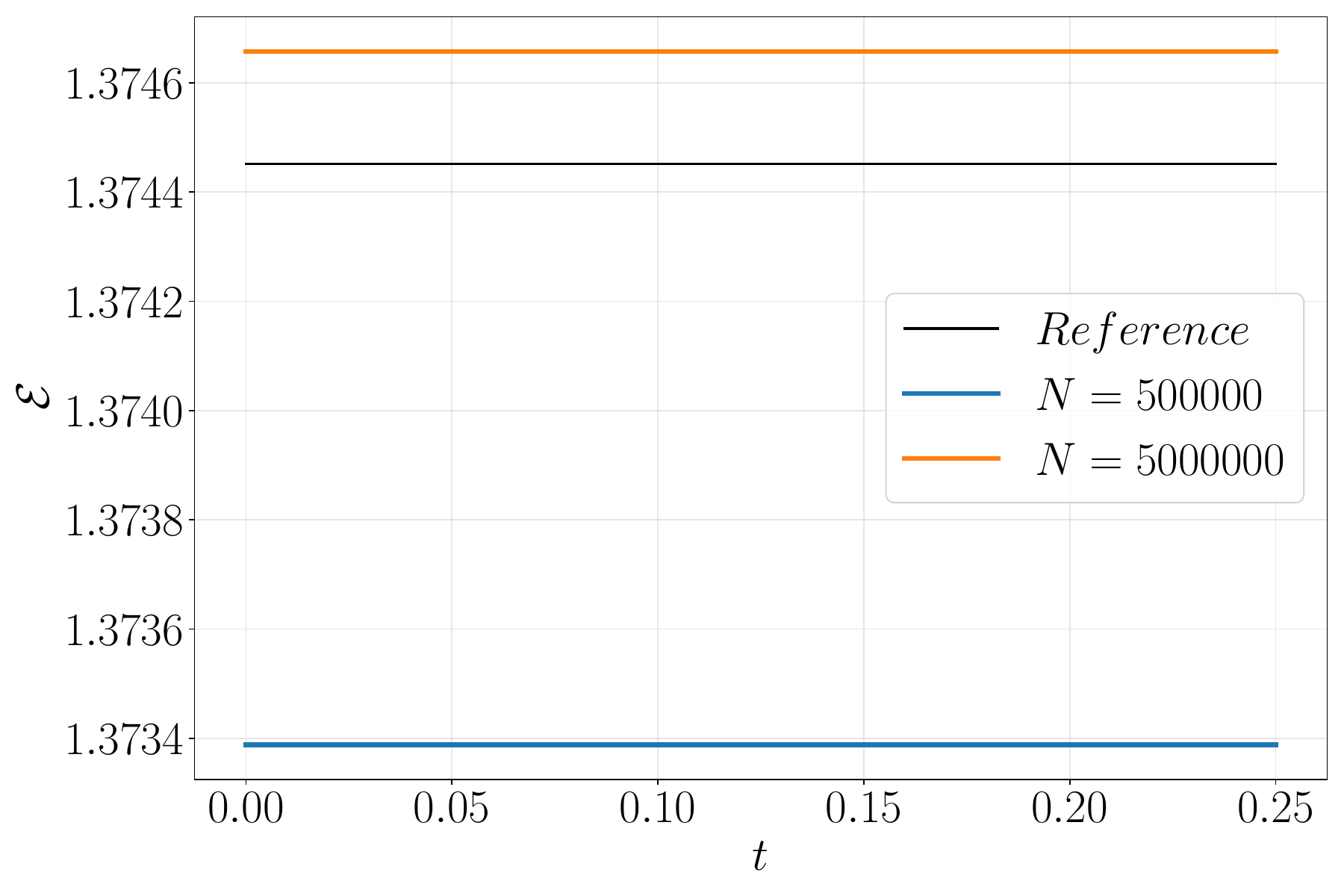}
        \label{fig:1d3venergy}
    }
		\centering
		\caption{Time evolution of relative $L_2$ error, energy for different $N$.}
		\label{fig:CompardifferN1D3V}
    \end{figure}

    \begin{figure}
        \centering
        \subfigure[Density $\rho$]{
        \centering
        \includegraphics[height=60mm,width=75mm]{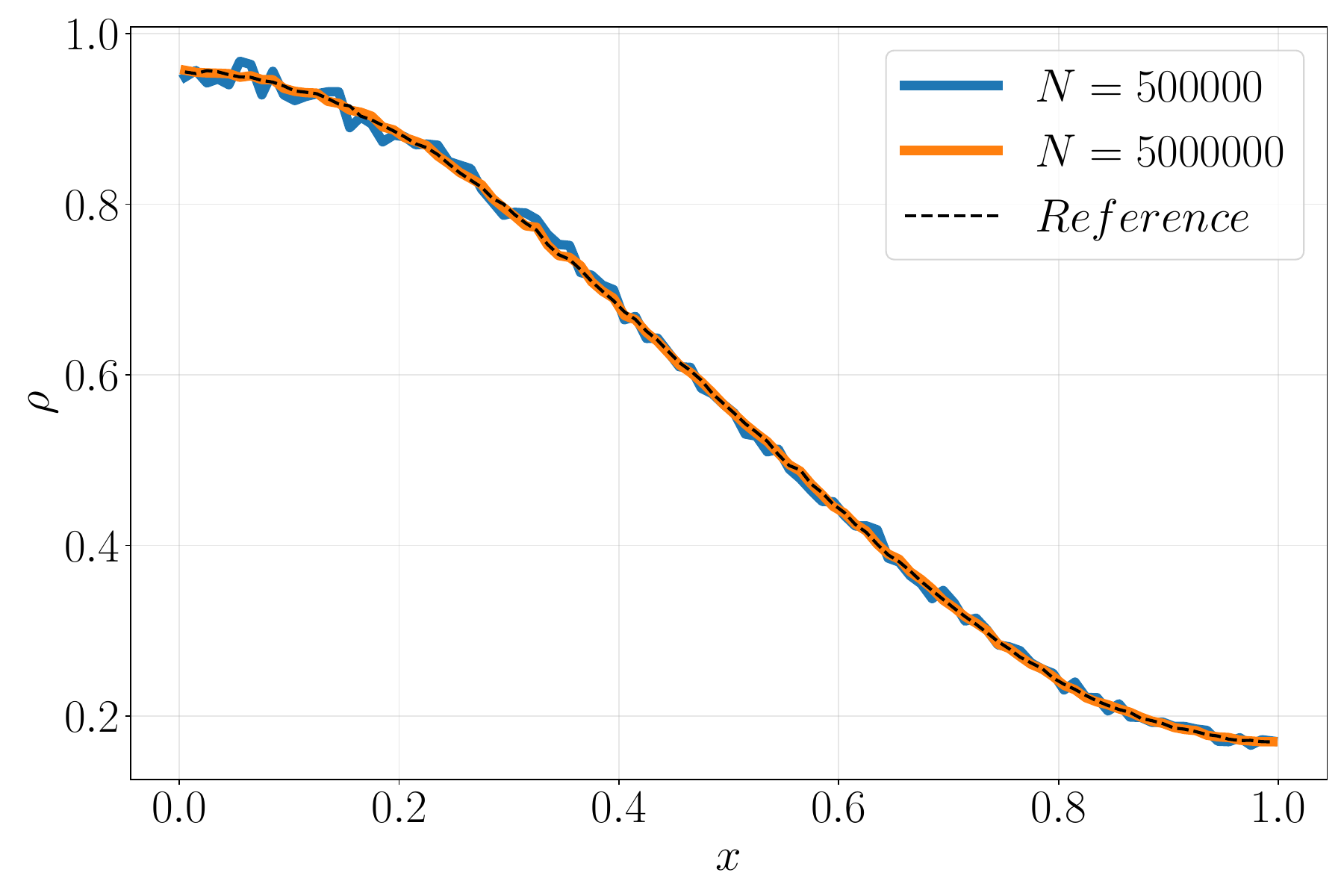}
        \label{fig:1D3Vdensity}
    }
    \centering
        \subfigure[Mean velocity $u$]{
        \centering
        \includegraphics[height=60mm,width=75mm]{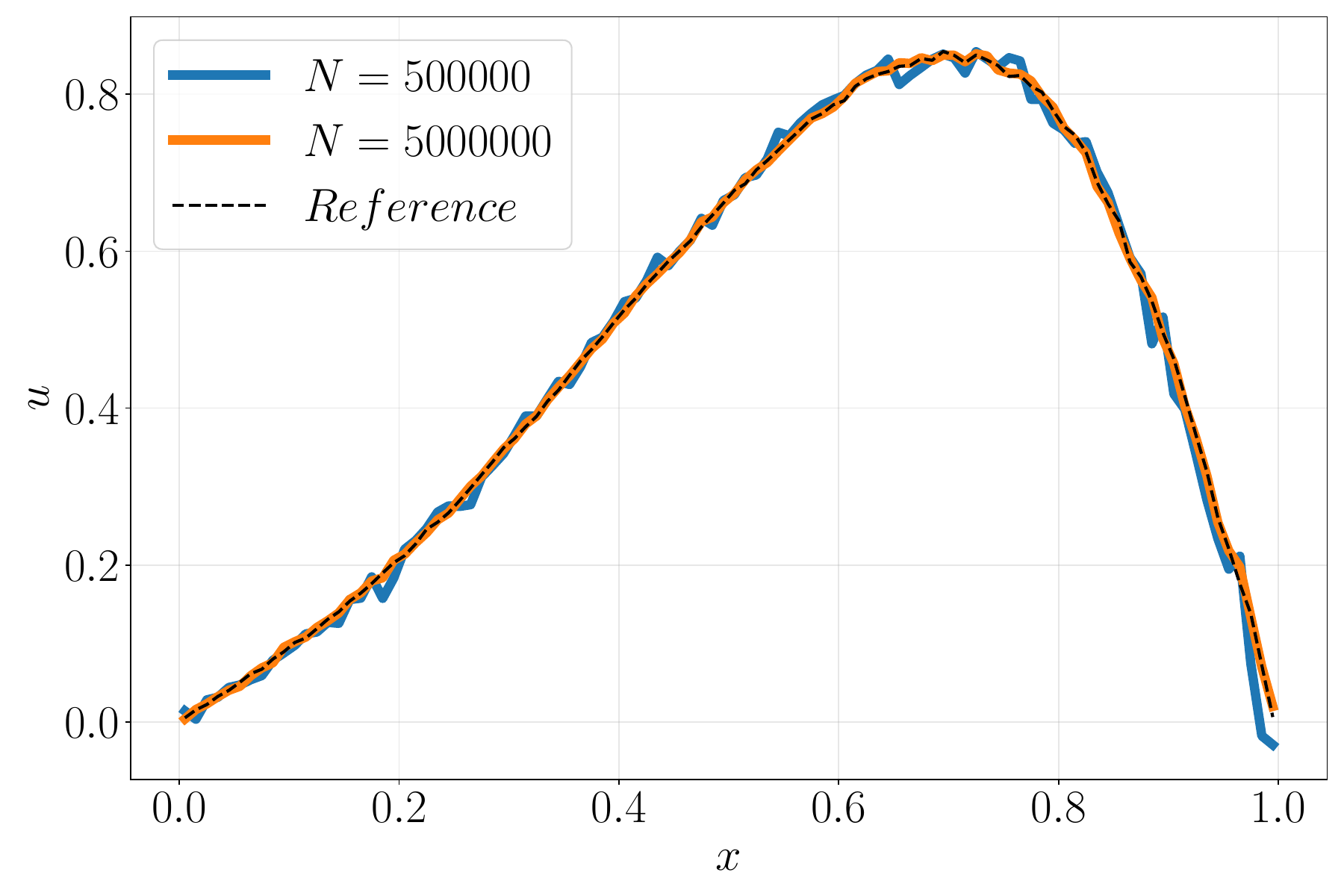}
        \label{fig:1D3Vvelocity}
    }
    \centering
        \subfigure[Temperature $T$]{
        \centering
        \includegraphics[height=60mm,width=75mm]{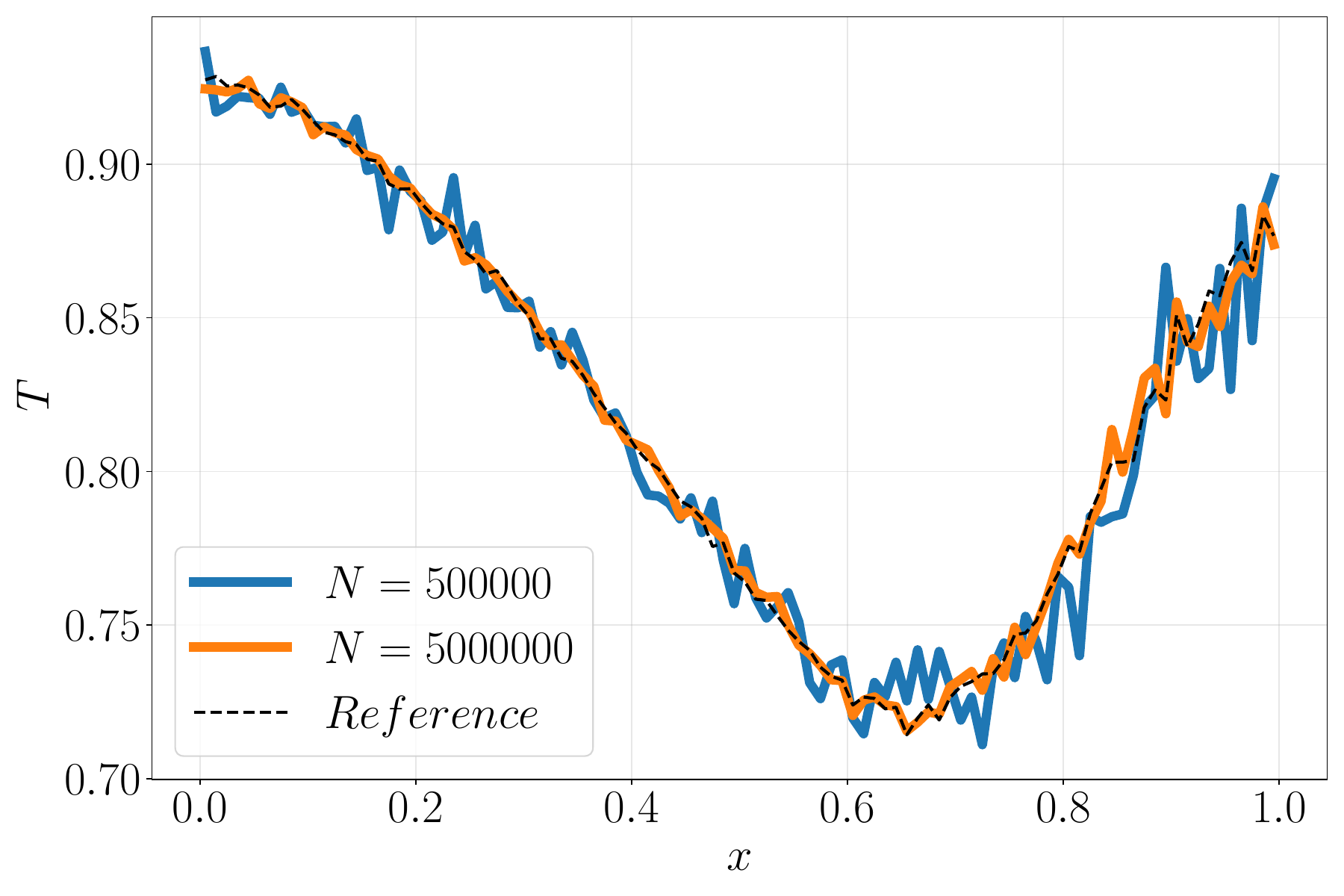}
        \label{fig:1d3Vtemperature}
    }
    \centering
            \subfigure[Density of phase space $(x,v_x)$ with $N=5,000,000$]{
        \centering
        \includegraphics[height=60mm,width=75mm]{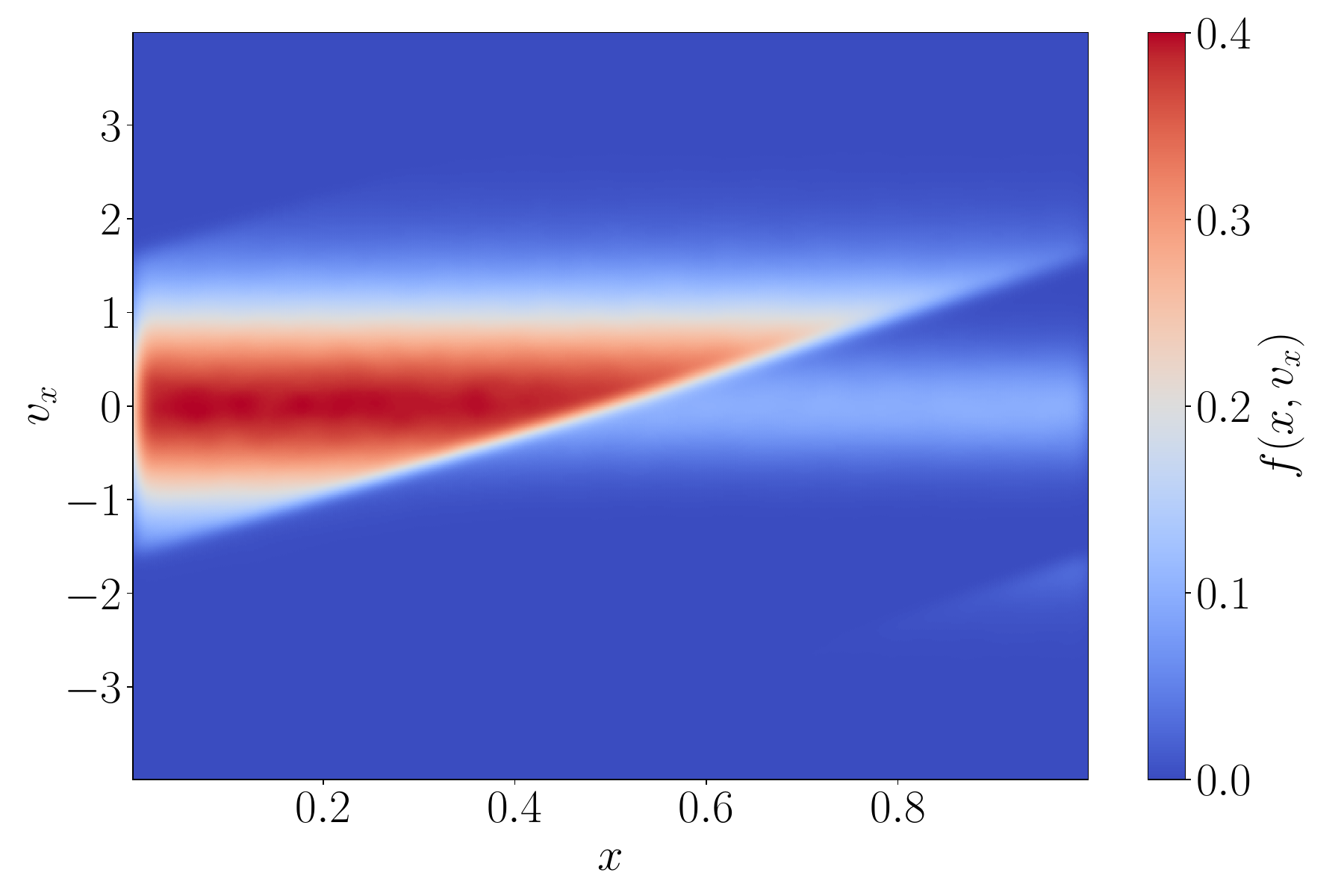}
        \label{fig:1D3Vphase5000000}
    }
        \caption{Density $\rho$, mean velocity $u$, temperature $T$ and phase space density $f(x,v_x)$ at $t=0.25$}
        \label{fig:1D3Vmeasure}
    \end{figure}

\bibliographystyle{plain}
\bibliography{refs}

\appendix
\section{Architecture of the Conditional Monotone Flow}
\label{app:nf}

Here we give the detailed architecture of the conditional monotone flow used in subsection~\ref{subsec:NF}. 

The condition is $c=(|q|,\Delta t) $ and it is first mapped by a condition encoder to a condition embedding $e=e(c)$ and subsequently passed through a FiLM parameter network 
\cite{perez2018film} $e_{F}=\text{FiLM}(e)$ to generate the parameters of conditional flow layers.

The full one-dimensional flow is written as the following composition:
\begin{equation}
    y=T_\Theta(z;c)=A\circ S_{2}\circ H_L \circ\cdots\circ H_1\circ S_{1}(z;e).
\end{equation}
Here \(z\sim p_0\) is the initial sample, \(y\) is the generated sample and \(\Theta\) denotes all trainable parameters.

The two Sinh-Arcsinh-type transformations are used before and after the stacked monotone layers. Motivated by the Sinh-Arcsinh transformation \cite{jones2009sinh}, we use the following conditional reparameterized form.
For \(m=1,2\), define
\begin{equation*}
    \gamma_m(u;e)=\frac{   \operatorname{asinh}(u)+\epsilon_m(e)}{\delta_m(e)},\quad\delta_m(e)>0, \qquad S_m(u;e)=s_m(e)+\exp(r_m(e))\sinh(\gamma_m(u;e)),
\end{equation*}where \(s_m(e)\), \(r_m(e)\), \(\epsilon_m(e)\) and \(\delta_m(e)\) are generated from the condition embedding. Here \(s_m(e)\) controls the shift, \(r_m(e)\) controls the logarithmic scale, \(\epsilon_m(e)\) controls skewness and \(\delta_m(e)\) controls tail behavior. The log-Jacobian is explicit:
\begin{equation}
    \log\left|\frac{\partial S_m}{\partial u}(u;e)\right|=r_m(e)+\log\cosh(\xi_m(u;e))-\log\delta_m(e)-\frac12\log(1+u^2).
\end{equation}

The main part of the flow consists of \(L\) strictly monotone tanh residual layers. The \(l\)-th layer is
\begin{equation}
\label{eq:monottanhlayer}
    H_l(u;e)=s_l(e_{F})+\exp(r_l(e_{F}))\left[u+\sum_{i=1}^{M}a_{li}(e_{F})\tanh\bigg(k_{li}(e_{F})u+d_{li}(e_{F})\bigg)\right].
\end{equation}
Here \(u\) is the scalar input of the layer, \(s_l(e_{F})\) is a conditional shift, \(r_l(e_{F})\) is a conditional log-scale, \(a_{l i}(e_{F})\) is the amplitude of the \(i\)-th tanh component, \(k_{li}(e_{F})\) is its slope parameter and \(d_{li}(e_{F})\) is its bias. Moreover we impose \(a_{li}(e_{F})\ge0,k_{li}(e_{F})\ge0\). Therefore
\begin{equation}
    \frac{\partial H_l}{\partial u}(u;e)=\exp(r_l(e_{F}))\left[1+\sum_{i=1}^{M}a_{li}(e_{F})k_{li}(e_{F})\operatorname{sech}^2\bigg(k_{li}(e_{F})u+d_{li}(e_{F})\bigg)\right]>0.
\end{equation}
Thus each \(H_l\) is strictly increasing, and hence invertible.

Finally, the conditional global affine head is
\begin{equation}
\label{eq:globalaffine}
    A(u;e)=s(e)+\exp(r(e))u,
\end{equation}
where \(s(e)\) and \(\exp(r(e))\) capture the main condition-dependent changes in location and log-scale.

Since every component is a one-dimensional monotone transformation, the full map \(T_\Theta(\cdot;c)\) is invertible. Its log-Jacobian is obtained by summing the log-Jacobians of all components in the composition.

\section{Additional results for optimization and NF learning}\label{app:optimizNF}

In subsection~\ref{subsec:optimiznf} to assess the robustness of the proposed method against noisy measurements, we actually contaminated the training data with three types of Gaussian noise. 

First, additive noise is added to the scattering angle:
\[
    \theta_i^{\mathrm{obs}}
    =
    \theta_i+\varepsilon_i^\theta,
    \qquad
    \varepsilon_i^\theta\sim \mathcal N(0,0.01^2).
\]
Second, multiplicative noise is added to the total cross section:
\[
    \sigma_{\mathrm{total}}^{\mathrm{obs}}(|q_j|)
    =
    \sigma_{\mathrm{total}}(|q_j|)
    \left(1+\varepsilon_j^\sigma\right),
    \qquad
    \varepsilon_j^\sigma\sim \mathcal N(0,0.02^2).
\]
Third, the relative speed is perturbed by relative Gaussian noise:
\[
    |q_i|^{\mathrm{obs}}
    =
    |q_i|
    \left(1+\varepsilon_i^q\right),
    \qquad
    \varepsilon_i^q\sim \mathcal N(0,0.01^2).
\]
    
Figure~\ref{fig:twooptimizsupple} gives additional diagnostics for the reconstructed impact parameter $b$. For each VSS test case, we compare the reconstructed and exact values of \(b(|q|,\theta)\), its angular derivative \(\partial_\theta b(|q|,\theta)\) and $b(|q|,\theta)\sin\theta$.
The first row of each subfigure shows these three quantities at several representative relative speeds, and the second row shows the corresponding absolute errors. The agreement confirms that the optimized impact-parameter map recovers not only the eigenvalues reported in the main text, but also the underlying angular structure used to reconstruct the collision kernel.

\begin{figure}[!t]
    \centering
    \subfigure[$\alpha=0,\ \beta=0$]{
        \centering
        \includegraphics[height=90mm,width=160mm]{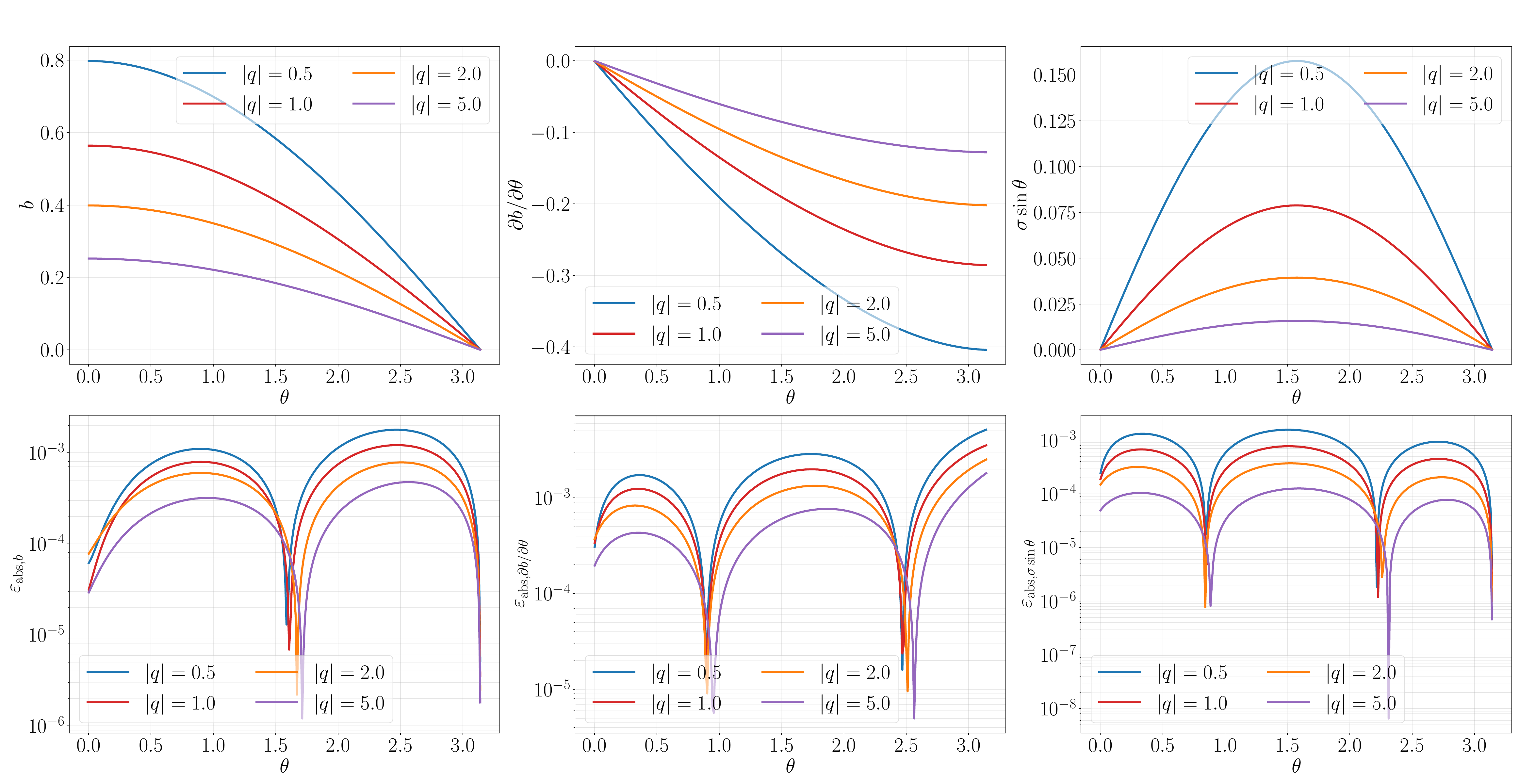}
        \label{fig:optsuppalpha0beta0}
    }

    \subfigure[$\alpha=0.384,\ \beta=1.3248$]{
        \centering
        \includegraphics[height=90mm,width=160mm]{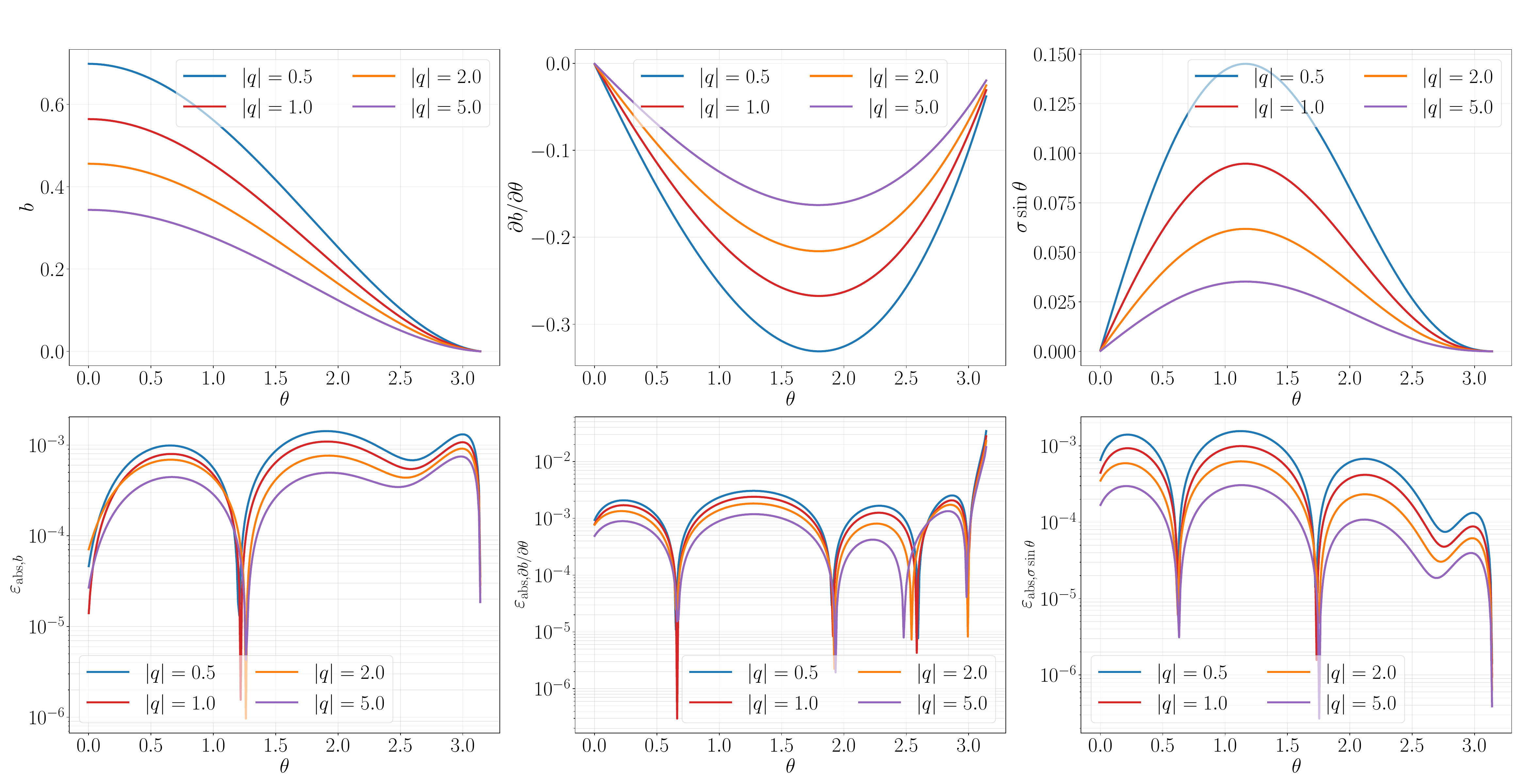}
        \label{fig:optsuppalpha0384beta13248}
    }
    \caption{
    Additional diagnostics for the Algorithm~\ref{algorithm:distributrans} in the two VSS test cases. In each subfigure, the first row compares the reconstructed and exact values of \(b(|q|,\theta)\), \(\partial_\theta b(|q|,\theta)\), and \(\sigma(|q|,\theta)\sin\theta\) at several representative relative speeds. The second row shows the corresponding absolute errors.}
    \label{fig:twooptimizsupple}
\end{figure}

\end{document}